\documentclass[a4paper, reqno, 11pt]{amsart}

\usepackage[english]{babel}
\usepackage{amsmath}
\usepackage{mathrsfs}
\usepackage{amssymb}
\usepackage{amsthm}
\usepackage{enumerate}
\usepackage{ifthen}
\usepackage{bbm}
\usepackage{color}
\usepackage{xcolor}
\usepackage{graphicx}
\provideboolean{shownotes} 
\setboolean{shownotes}{true}
\usepackage{hyperref}
\usepackage{geometry}
\geometry{a4paper,top=2cm,bottom=3cm,left=2cm,right=2cm,
          heightrounded,bindingoffset=5mm}

\newcommand{\margnote}[1]{
\ifthenelse{\boolean{shownotes}}%
{\marginpar{\raggedright\tiny\texttt{#1}}}%
{}%
}

\newcommand{\hole}[1]{
\ifthenelse{\boolean{shownotes}}%
{\begin{center} \fbox{ \rule {.25cm}{0cm}
\rule[-.1cm]{0cm}{.4cm} \parbox{.85\textwidth}{\begin{center}
\texttt{#1}\end{center}} \rule {.25cm}{0cm}}\end{center}}
{}
}
\newtheorem{thm}{Theorem}[section]

\newtheorem{prop}[thm]{Proposition}
\newtheorem{lem}[thm]{Lemma}

\newtheorem{rem}[thm]{Remark}

\newtheorem{defn}[thm]{Definition}

\newcommand{\e}{\varepsilon}		       
\newcommand{\R}{\mathbb{R}}
\newcommand{\T}{\mathbb{T}^2}
\newcommand{\N}{\mathbb{N}}
\newcommand{\Z}{\mathbb{Z}}
\newcommand{\E}{\mathbb{E}}
\newcommand{\uv}{u^{\nu}}

\newcommand{\om}{\omega}
\newcommand{\omv}{\omega^{\nu}}
\newcommand{\dive}{\mathop{\mathrm {div}}}
\newcommand{\curl}{\mathop{\mathrm {curl}}}
\newcommand{\weakto}{\rightharpoonup}
\newcommand{\weaktos}{\stackrel{*}{\rightharpoonup}}
\newcommand{\de}{\,\mathrm{d}}

\newcommand{\omvep}{\omega^{\nu,\e}_{1}}
\newcommand{\omvei}{\omega^{\nu,\e}_{\infty}}
\newcommand{\omvepi}{\omega^{\nu,\e}_{0,1}}
\newcommand{\omveii}{\omega^{\nu,\e}_{0,\infty}}

\numberwithin{equation}{section}

%

\begin{document}

\title[Vanishing viscosity]{Strong convergence of the vorticity for the 2D Euler Equations in the inviscid limit}

\author[G. Ciampa]{Gennaro Ciampa}
\address[G. Ciampa]{Department Mathematik Und Informatik\\ Universit\"at Basel \\Spiegelgasse 1 \\CH-4051 Basel \\ Switzerland }
\email[]{\href{gennaro.ciampa@}{gennaro.ciampa@unibas.ch}}

\author[G. Crippa]{Gianluca Crippa}
\address[G. Crippa]{Department Mathematik Und Informatik\\ Universit\"at Basel \\Spiegelgasse 1 \\CH-4051 Basel \\ Switzerland}
\email[]{\href{gianluca.crippa@}{gianluca.crippa@unibas.ch}}

\author[S. Spirito]{Stefano Spirito}
\address[S. Spirito]{DISIM - Dipartimento di Ingegneria e Scienze dell'Informazione e Matematica\\ Universit\`a  degli Studi dell'Aquila \\Via Vetoio \\ 67100 L'Aquila \\ Italy}
\email[]{\href{stefano.spirito@}{stefano.spirito@univaq.it}}

\begin{abstract}
In this paper we prove the uniform-in-time $L^p$ convergence in the inviscid limit of a family $\omega^\nu$ of solutions of the $2D$ Navier-Stokes equations towards a renormalized/Lagrangian solution $\omega$ of the Euler equations.  We also prove that, in the class of solutions with bounded vorticity, it is possible to obtain a rate for the convergence of $\omv$ to $\om$ in $L^p$. Finally, we show that solutions of the Euler equations with $L^p$ vorticity, obtained in the vanishing viscosity limit, conserve the kinetic energy. The proofs are given by using both a (stochastic) Lagrangian approach and an Eulerian approach. 
\end{abstract}

\maketitle

\section{Introduction}
We consider the Cauchy problem for the two-dimensional incompressible Euler equations in vorticity formulation given by 
\begin{equation}\label{eq:vort}
\begin{cases}
\partial_t\omega+u\cdot\nabla\omega=0,\\
\omega|_{t=0}=\omega_0,
\end{cases}
\end{equation}
where $u$ is the velocity field and $\omega_0$ is a given initial datum. The velocity is recovered from the vorticity via the Biot-Savart law. A classical problem in fluid mechanics is the approximation in the limit $\nu\to 0$ of vanishing viscosity (also called inviscid limit) of solutions of \eqref{eq:vort} by solutions of the incompressible Navier-Stokes equations 
\begin{equation}\label{eq:vort_NS}
\begin{cases}
\partial_t \omega^\nu+u^\nu\cdot\nabla\omega^\nu=\nu\Delta \omega^\nu,\\
\omega^\nu|_{t=0}=\omega^{\nu}_0.
\end{cases}
\end{equation}

The goal of this paper is to study several problems related to the convergence of $\omega^{\nu}$ to $\omega$  when the equations \eqref{eq:vort} and \eqref{eq:vort_NS} are considered either on the two-dimensional torus or on the whole space.\\

Local-in-time existence of classical solutions of \eqref{eq:vort} with smooth initial data was proved by Lichtenstein in \cite{L}, while global-in-time existence was proved by Wolibner in \cite{W}. Assuming only integrability hypothesis on the initial vorticity, more precisely $\omega_0\in L^1\cap L^p$ for some $p> 1$, DiPerna and Majda in \cite{DPM} proved global existence of weak solutions. The results in \cite{DPM} were extended to the case of a finite Radon measure in $H^{-1}_{loc}$ with distinguished sign  in \cite{De} and to $\omega_0\in L^1$ in \cite{VW}. Uniqueness is known only for $p=\infty$ and was proved by Yudovi\v{c} in \cite{Y}. The uniqueness for unbounded vorticities is an old and outstanding open problem and only very recently some partial progress towards nonuniqueness has been achieved, see \cite{BM, BS, MeS, VI, VII}.
\\

Concerning the behaviour of the Navier-Stokes vorticity $\omv$ in the limit of vanishing viscosity, in the setting of DiPerna-Majda \cite{DPM} it holds that, up to a subsequence, there exists $\omega\in L^{\infty}(L^{p})$ such that 
\begin{equation}\label{eq:weaks}
\omv\weaktos \om\quad\mbox{ weakly* in } L^{\infty}(L^{p}).
\end{equation}
The limit $\omega$ is a distributional solution of \eqref{eq:vort} provided $p>4/3$. We are interested in the strong convergence of the vorticity, namely 
\begin{equation}\label{eq:strongintro}
\omv\to \om\quad\mbox{ strongly in } C(L^{p}),\,p\in[1,\infty).
\end{equation}
The upgrade of \eqref{eq:weaks} to \eqref{eq:strongintro} was proved by several authors in various settings. In particular, the case of a smooth initial datum is well-established, see e.g. \cite{C1} and \cite{M} and references therein. In less regular settings, we recall the result in \cite{CW1} for vortex-patch solutions and then for more general bounded solutions by requiring additional assumptions on the Euler path in \cite{CW2}. In the very recent paper \cite{CDE}, P. Constantin, T. Drivas and T. Elgindi proved the upgrade to strong convergence in the case of bounded vorticity without additional assumptions. Precisely, they proved that on the two-dimensional torus, if $\omega_0\in L^{\infty}$ and $\omega\in L^{\infty}(L^{\infty})$ is the unique bounded solution of \eqref{eq:vort}, then for any $1\leq q <\infty$
\begin{equation}\label{eq:intro1}
\omega^{\nu}\to\omega\quad\mbox{ strongly in }C(L^{q}). 
\end{equation}

In this paper we improve the result of \cite{CDE} by proving that both in the periodic setting and in the whole space setting, if $\omega_0\in L^{1}\cap L^{p}$ with $1\leq p<\infty$ and $\omega$ as in \eqref{eq:weaks} is a renormalized solution of \eqref{eq:vort} in the sense of DiPerna-Lions \cite{DPL}, then \eqref{eq:intro1} holds for for any $1\leq q \leq p$. We notice that the possibility of this improvement was already remarked in \cite[Remark 2]{CDE} and proved at the very same time of our paper and independently from us in \cite{SW} in the case of the torus and for $p>1$. \\

We give two proofs of the convergence result described above which are based on two different approaches: the Lagrangian approach and the Eulerian approach.\\
 
In the Lagrangian approach, we focus on the case of the two-dimensional flat torus and only consider $p>1$; contrary to \cite{SW}, we give a quantitative proof. Precisely, we prove that for any $\delta>0$, there exists $C=C(\delta,\omega_0)>0$ such that for $\nu$ small enough
\begin{equation}\label{eq:intro2}
\sup_{t\in(0,T)}\|\omv(t)-\om(t)\|_{L^{p}}\leq \delta+\frac{C(\delta,\omega_0)}{|\ln(\max\{\sqrt{\nu}, \|\uv-u\|_{L^{1}(L^1)}\})|}+\|\omega_0^{\nu}-\omega_0\|_{L^{p}}.
\end{equation}
We refer to Theorem \ref{teo:main2} for the rigorous statement. To obtain \eqref{eq:intro2} we first exploit the stochastic Lagrangian formulation of the incompressible Navier-Stokes equations (as in the paper by P. Constantin and G. Iyer \cite{CI}) and then we revisit the quantitative estimates for flows of Sobolev vector fields obtained by the second author and C. De Lellis in \cite{CDL} and their stochastic counterpart by N. Champagnat and P.-E. Jabin in \cite{CJ}, where a more general result on quantitative estimates for stochastic flows and their deterministic limit is given. In particular, the result is achieved by studying the zero-noise limit from stochastic towards deterministic flows of irregular vector fields. This result is of its own importance in the theory of stochastic flows. We refer to the recent monograph of C. Le Bris and P.-L. Lions \cite{LBL} for recent advances on stochastic flows of irregular vector fields.\\

Of course, \eqref{eq:intro2} is not fully quantitative since it depends implicitly on the difference of the velocities and some approximation of the initial datum. While the dependence on the approximation of the initial datum can be made quantitative by assuming addition regularity, e.g. we could assume $\omega_0\in H^{s}$ with $s>0$, the dependence on the the difference of the velocities is difficult to avoid, unless the initial datum $\omega_0\in L^{\infty}$. The  second main result of this note concerns the analysis of the rate of convergence when the initial vorticity is merely bounded. In \cite{CDE} it is proved that in the case of the two-dimensional torus, if $\omega_0\in L^{\infty}\cap B^{s}_{p,\infty}$, with $s>0$ and $p\geq 1$, ($B^{s}_{p,\infty}$ is the classical Besov space), then \begin{equation}\label{eq:cderateintro}
\sup_{0\leq t\leq T}\|\omega^{\nu}(t)-\omega(t)\|_{L^p}\leq C \nu^{\frac{s\exp{(-CT\|\omega^0\|_{L^{\infty}})}}{1+s\exp{(-CT\|\omega^0\|_{L^{\infty}})}}}.
\end{equation}
A crucial tool to obtain \eqref{eq:cderateintro} is the following {\em losing estimate}: if $\omega_0\in L^{\infty}\cap B^{s}_{p,\infty}$ then the solution $\omega^\nu(t)\in L^{\infty}\cap B^{s(t)}_{p,\infty}$ uniformly in $\nu$ with $s(t)=s\exp{(-Ct\|\omega_0\|_{L^{\infty}})}$. In \cite[Remark 2]{CDE} the authors notice that by the very same argument used to prove \eqref{eq:cderateintro} is possible to obtain for any $\omega_0\in L^{\infty}$ a rate of convergence. In the present paper, we also obtain a rate of convergence for any $\omega_0\in L^{\infty}$, but we use a different argument. Precisely, by using an Osgood-type argument as in the result of J.-Y. Chemin \cite{Ch}, arguing directly at Lagrangian level and using the continuity of translation in $L^1$ for $\omega_0$ we deduce that
there exist $\nu_0>0$ and a continuous function $\phi:\R^+\to\R^+$ with $\phi(0)=0$ such that for $\nu<\nu_0$
\begin{equation}\label{eq:intro3}
\sup_{t\in(0,T)}\|\omv(t)-\om(t)\|_{L^{p}(\T)}\lesssim \phi(\nu),
\end{equation}
where the implicit constant in the inequality \eqref{eq:intro3} grows with $T$ and $\phi$ is not in general explicit. We refer to Theorem \ref{teo:rate} for the rigorous statement. It is worth to point out that compared to \cite{CDE} we do not propagate any regularity of the initial data. Indeed, the rate of convergence is achieved again directly in the zero-noise limit of the stochastic flow. 
\bigskip

In the second part we use the Eulerian approach to prove strong convergence of the vorticity. In particular, we consider also the case $p=1$ and the case when the domain is the whole space.  The main theorem of this part is Theorem \ref{teo:strong_convergence_vort}. 
We note that the fact that renormalization implies strong convergence is already valid for the linear transport equation, see \cite[Theorem II.4]{DPL}. We extend to the Euler equations and to the case  $p=1$ the arguments in \cite{DPL}. Roughly speaking, the idea is to use the Radon-Riesz theorem combined with an Ascoli-Arzel\`a argument. Notice that our proof based on the Eulerian approach is not quantitative, as usual for compactness arguments. Recently, D. Bresch and P.-E. Jabin proved in \cite{BJ} quantitative compactness estimates for solutions of the continuity equation without using Lagrangian arguments and exploited them for the analysis of compressible fluids. We believe that extending these estimates to the context of the $2D$ Euler equations would be very interesting.\\

Finally, we comment on the extension from the flat torus to the whole space, which is crucial to address the fundamental question of the conservation of the energy: it allows to extend from the two-dimensional torus to the whole space the result of \cite{CFLS} on the conservation of kinetic energy for solutions of the Euler equations obtained as limit of vanishing viscosity when the initial vorticity is in $ L^{p}$. Indeed, as already noticed in \cite{CCS3}, the main issue in extending the result of \cite{CFLS} to the whole space is to obtain global strong convergence in $C(L^{2})$ of the velocity. Due to the lack of compact embedding this cannot be obtained by using the Aubin-Lions lemma, but it is obtained by exploiting a Serfati-type formula \cite{S}, which in turn requires the strong convergence of the vorticities. We refer to Theorem \ref{teo:energy}  for this result. 

\bigskip
\subsection*{Acknowledgments}
The authors gratefully acknowledge useful discussions with Ji\v r\'i \v Cern\'y, Peter Constantin, Theodore Drivas, Tarek Elgindi, Gautam Iyer, Pierre-Emmanuel Jabin, Christian Seis, Dimitrios Tsagkarogiannis, and Emil Wiedemann. 
This research has been supported by the ERC Starting Grant 676675 FLIRT.

\section{The Lagrangian approach}
The section is organized as follows: we first fix the notations and recall some of the notions needed, then we introduce the (stochastic) Lagrangian formulations of the Euler and the Navier-Stokes equations and finally we prove the two main theorems of this section, namely Theorem \ref{teo:main2} and Theorem \ref{teo:rate}. 
\subsection{Notations and Preliminaries}
We denote by $\T$ the flat torus, by $\mathsf{d}(\cdot,\cdot)$ the geodesic distance and by $\mathscr{L}^2$ the Haar measure on $\T$. We denote by $B_{r}(x)$ the geodesic ball centered at $x$ with radius $r$. We also identify the flat torus with the cube $[0,1)\times[0,1)$, in particular 
\begin{equation*}
\mathsf{d}(x,y):=\min\{|x-y-k|: k\in \Z^2\mbox{ such that }|k|\leq 2\}.
\end{equation*}
Notice that the Haar measure coincides with the Lebesgue measure on the square and functions on $\T$ can be identified with $1$-periodic functions on $\R^2$.\par
\subsection{The Lagrangian formulation of the Euler and the Navier-Stokes equations in two dimensions}
Let $T>0$ be finite but arbitrary and consider the $2D$ Euler equations in $(0,T)\times\T$ in vorticity formulation:
\begin{equation}\label{eq:e}
\begin{cases}
\partial_t \om+u\cdot\nabla\om=0,\\
u=\nabla^{\perp}(-\Delta)^{-1}\omega.
\end{cases}
\end{equation}
We assume periodic boundary conditions and the following initial condition for \eqref{eq:e} 
\begin{equation}\label{eq:ei}
\om|_{t=0}=\om_0.
\end{equation}
Next, let $\nu>0$ and consider the  $2D$ Navier-Stokes equations in $(0,T)\times\T$,
\begin{equation}\label{eq:ns}
\begin{cases}
\partial_t \omv+\uv\cdot\nabla\omv-\nu\Delta\omv=0,\\
\uv=\nabla^{\perp}(-\Delta)^{-1}\omv,
\end{cases}
\end{equation}
with initial datum 
\begin{equation}\label{eq:e2}
\omv|_{t=0}=\om_0^\nu
\end{equation}
and periodic boundary conditions.\\

We introduce the {\em Lagrangian formulations} of the systems \eqref{eq:e} and \eqref{eq:ns}.  We start with the Euler equations. We recall that for smooth solutions, by the theory of characteristics, if $X:[0,T]\times[0,T]\times \T\to\T$ solves 
\begin{align}
&\begin{cases}
\partial_s X_{t,s}(x)=u(s,X_{t,s}(x)),\hspace{0.5cm} s\in[0,T],\\
X_{t,t}(x)=x,
\end{cases}\label{eq:el1}
\end{align}
for any given $t\in(0,T)$, then 
\begin{align}
&u(t,x):=(\nabla^{\perp}(-\Delta)^{-1}\omega(t,\cdot))(x),\label{eq:el2}\\
&\omega(t,x):=\omega_0(X_{t,0}(x))\label{eq:el3}
\end{align}
solve the $2D$ Euler equations in $(0,T)\times\T$ with initial datum $\omega_0$.\\

Before introducing Lagrangian solutions to the Euler equations, we give the definition of flow of a non-smooth vector field.
\begin{defn}[Regular Lagrangian flows]\label{def:rlf}
The map $X\in L^{\infty}((0,T)\times(0,T)\times\T)$ is a regular Lagrangian flow of \eqref{eq:el1} if for a.e. $x\in \T$ and for any $t\in[0,T]$ the map $s\in[0,T]\mapsto X_{t,s}(x)\in\T$ is an absolutely continuous solution of \eqref{eq:el1} and for any $t\in[0,T]$ and $s\in[0,T]$ the map $x\in\T\mapsto X_{t,s}(x)\in\T$ is measure-preserving. 
\end{defn}
The definition of Lagrangian solutions of the Euler equations is the following:
\begin{defn}[Lagrangian solutions of the 2D Euler equations]\label{def:el}
Let $p\in(1,\infty)$ and $\omega_0\in L^{p}(\T)$. We say that $(u,\omega)$ is a {\em Lagrangian} solution of the $2D$ Euler equations if 
\begin{equation}\label{eq:reg}
(u,\omega)\in L^{\infty}((0,T);W^{1,p}(\T))\times L^{\infty}((0,T);L^{p}(\T)),
\end{equation}	
there exists a regular Lagrangian flow $X\in L^{\infty}((0,T)\times(0,T)\times\T)$ in the sense of Definition \ref{def:rlf}, and for a.e. $(t,x)\in(0,T)\times\T$ the functions $u$ and $\omega$ satisfy \eqref{eq:el2} and \eqref{eq:el3}. 
\end{defn}
We remark that in the regularity class \eqref{eq:reg}, given $u$ the regular Lagrangian flow $X$ is unique, see \cite{CDL} and \cite{DPL}. \\

Next, we consider the Navier-Stokes equations \eqref{eq:ns} and we recall that in two dimensions solutions of the Navier-Stokes equations \eqref{eq:ns} are regular and unique. Given a probability space $(\Omega, \mathcal{F}, \mathbb{P})$ we define the map $X^\nu:[0,T]\times[0,T]\times \T\times\Omega\to\T$ as follows.\par
 For $\mathbb{P}$-a.e. $\xi\in\Omega$ and for any $t\in(0,T)$, for $s\in[0,T]$ we consider a $\T$-valued Brownian motion $W_s$ adapted to the backward filtration, i.e. satisfying $W_t=0$. The map $s\mapsto X^{\nu}_{t,s}(x,\xi)$ is obtained by solving 
\begin{align}
&\begin{cases}
\de X^\nu_{t,s}(x,\xi)=u^\nu(s,X^\nu_{t,s}(x,\xi))\de s + \sqrt{2\nu} \de W_s(\xi),\hspace{0.5cm}s\in[0,t),\\
X^\nu_{t,t}(x,\xi)=x,
\end{cases}\label{eq:nsl1}
\end{align}
For $\mathbb{P}$-a.e. $\xi\in\Omega$ the map $x\in\T\mapsto X^{\nu}_{t,s}(x,\xi)\in\T$ is measure-preserving for any $t\in[0,T]$ (see \cite{LBL}) and $s\in[0,t]$ and, by the Feynman-Kac formula (see \cite{K, LBL}),  $\omv=\mathbb{E}[\omv_0(X_{t,0}(x))]$ solves the advection-diffusion equation
\begin{equation*}
\partial_{t}\omv+\uv\cdot\nabla\omv-\nu\Delta\omv=0
\end{equation*}
with initial datum $\omv_0$, where we have denoted by $\mathbb{E}[f]$ the average with respect to $\mathbb{P}$, also called expectation. Therefore,
\begin{align}
&\,\,\uv(t,x):=(\nabla^{\perp}(-\Delta)^{-1}\omv(t,\cdot))(x),\label{eq:nsl2}\\
&\,\,\omv(t,x):=\mathbb{E}[\omega^{\nu}_0(X^{\nu}_{t,0}(x))]\label{eq:nsl3}
\end{align}
solve the Navier-Stokes equations \eqref{eq:ns}.\par
 
 We remark that the probability space and the Brownian motion can be arbitrarily chosen. Indeed, since $\uv$ is a smooth function, the equation \eqref{eq:nsl1} is satisfied in the strong sense \cite{K}, see also \cite{LBL}, namely one can find a solution $X^{\nu}_{t,\cdot}$ to \eqref{eq:nsl1} on any given filtered probability space with any given adapted Brownian motions as described above.  

\subsection{Quantitative strong convergence of the vorticity}

In this section we prove our first main result.
\begin{thm}\label{teo:main2}
Let $p\in(1,\infty)$ and $\omega_0\in L^{p}(\T)$. Let $\{\omega_0^\nu\}_{\nu}\subset L^{p}(\T)$ be a sequence of smooth functions such that 
\begin{equation*}
\begin{aligned}
&\omega_0^\nu\to \omega_0\quad\mbox{ strongly in }L^{p}(\T),
\end{aligned}
\end{equation*}
and $(\uv,\omv)$ be the solution of the Navier-Stokes equations with initial datum $\omega_0^{\nu}$. 
Assume that there exists $(u,\omega)$ Lagrangian solution of the Euler equations such that, up to a subsequence not relabelled, 
\begin{equation}\label{eq:convl}
\begin{aligned}
&\uv\weaktos u\quad\mbox{ weakly* in }L^{\infty}((0,T);L^{2}(\T)).
\end{aligned}
\end{equation}
Then
\begin{equation*}
\begin{aligned}
\omv\to \omega\quad\mbox{ strongly in }C([0,T];L^{p}(\T)).
\end{aligned}
\end{equation*}
Moreover, for any $\delta>0$ there exists $C=C(\delta,\omega_0)>0$ such that for $\nu$ small enough
\begin{equation}\label{eq:quant}
\sup_{t\in(0,T)}\|\omv(t)-\om(t)\|_{L^{p}}\leq \delta+\frac{C(\delta,\omega_0)}{|\ln(\max\{\sqrt{\nu}, \|\uv-u\|_{L^{1}(L^1)}\})|}+\|\omega_0^\nu-\omega_0\|_{L^{p}}.
\end{equation}
\end{thm}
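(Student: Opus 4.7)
The plan is to compare the stochastic Lagrangian representation $\omv(t,x) = \E[\omega_0^\nu(X^\nu_{t,0}(x,\xi))]$ with the deterministic one $\omega(t,x) = \omega_0(X_{t,0}(x))$ and to quantify the zero-noise limit $X^\nu \to X$ via a logarithmic estimate in the spirit of Crippa--De Lellis \cite{CDL} and its stochastic extension by Champagnat--Jabin \cite{CJ}. Given $\delta > 0$, fix a smooth mollification $\omega_0^\e$ of $\omega_0$ on $\T$ such that $\|\omega_0^\e - \omega_0\|_{L^p(\T)} < \delta/3$; the quantities $\|\omega_0^\e\|_{L^\infty}$ and $\|\nabla \omega_0^\e\|_{L^\infty}$ then depend only on $\delta$ and $\omega_0$, and will be absorbed into the constant $C(\delta, \omega_0)$. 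Decompose the difference as
\[
\omv(t,x) - \omega(t,x) = \E[(\omega_0^\nu - \omega_0^\e)(X^\nu_{t,0}(x,\xi))] + \E[\omega_0^\e(X^\nu_{t,0}(x,\xi)) - \omega_0^\e(X_{t,0}(x))] + (\omega_0^\e - \omega_0)(X_{t,0}(x)).
\]
Since $X$ is measure-preserving and $X^\nu$ is $\mathbb{P}$-a.s.\ measure-preserving, Jensen's inequality combined with Fubini bounds, in $L^p$, the first and third terms by $\|\omega_0^\nu - \omega_0^\e\|_{L^p}$ and $\|\omega_0^\e - \omega_0\|_{L^p}$; these together yield the $\delta + \|\omega_0^\nu - \omega_0\|_{L^p}$ contribution to \eqref{eq:quant}.

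The core task is to estimate the middle term in $L^p$. Set $D_{t,s}(x,\xi) := \mathsf{d}(X^\nu_{t,s}(x, \xi), X_{t,s}(x))$ and use the Lipschitz and $L^\infty$ bounds on $\omega_0^\e$ to reduce this to controlling a suitable power of
\[
\mathcal{D}_\nu(t) := \int_\T \E\bigl[D_{t,0}(x, \xi) \wedge 1\bigr] dx.
\]
To control $\mathcal{D}_\nu(t)$ I would introduce, for an auxiliary parameter $r > 0$, the functional
\[
\Phi_{r,\nu}(s) := \int_\T \E\left[\log\left(1 + \frac{D_{t,s}(x, \xi)^2}{r^2}\right)\right] dx
\]
for $s$ running backward from $t$ to $0$, so that $\Phi_{r,\nu}(t) = 0$. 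Applying It\^o's formula and bounding the drift increment $u^\nu(X^\nu) - u^\nu(X)$ through the maximal function of $\nabla u^\nu$ \`a la Crippa--De Lellis (whose $L^p$ norm is uniformly bounded by $\|\omega_0^\nu\|_{L^p}$ via Biot--Savart and Calder\'on--Zygmund), the velocity error $u^\nu(X) - u(X)$ by $\|u^\nu - u\|_{L^1(L^1)}/r$ using measure-preservation of $X$, and the It\^o correction from $\sqrt{2\nu}\, dW$ by $\nu T /r^2$, one obtains
\[
\Phi_{r,\nu}(0) \lesssim 1 + \frac{\|u^\nu - u\|_{L^1(L^1)}}{r} + \frac{\nu T}{r^2}.
\]
The choice $r = r_\nu := \max\{\sqrt{\nu}, \|u^\nu - u\|_{L^1(L^1)}\}$ makes this bounded independently of $\nu$, and a standard Chebyshev argument followed by optimization of the truncation level converts the logarithmic bound into $\mathcal{D}_\nu(T) \lesssim 1/|\log r_\nu|$.

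Combining the three pieces of the decomposition yields \eqref{eq:quant}; the strong convergence in $C([0,T]; L^p)$ then follows because $\|u^\nu - u\|_{L^1(L^1)} \to 0$ as a consequence of \eqref{eq:convl}, the a priori $L^p$ bound on $\omv$, and standard compactness for the velocities. The main obstacle is the rigorous execution of the It\^o/maximal-function analysis for $\Phi_{r,\nu}$ on the torus: one has to confirm that the Crippa--De Lellis singular-integral estimate for the drift increment extends to the stochastic setting uniformly in $\nu$, and that the noise contribution is precisely of the form $\nu T/r^2$, so that balancing against the velocity-error term produces the parabolic scale $\sqrt{\nu}$ as the natural threshold below which the stochastic perturbation dominates. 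Getting these three contributions to combine into a clean logarithmic estimate that interpolates the deterministic Crippa--De Lellis result (at $\nu = 0$) with a pure zero-noise one (at $u^\nu = u$) is the conceptual heart of the proof, accomplished in the abstract framework of \cite{CJ}.
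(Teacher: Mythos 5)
Your proposal is correct and follows essentially the same route as the paper: the same decomposition of $\omega^\nu-\omega$ via a Lipschitz/mollified approximation of $\omega_0$, the same logarithmic functional $\int_{\T}\E[\log(1+|X^\nu_{t,s}-X_{t,s}|^2/r^2)]\,\de x$ treated by It\^o's formula with maximal-function control of the drift, the same choice $r=\max\{\sqrt{\nu},\|u^\nu-u\|_{L^1(L^1)}\}$, and the same Chebyshev/truncation step yielding the $1/|\ln r|$ bound. The only cosmetic difference is that you add and subtract $u^\nu(X)$ where the paper adds and subtracts $u(X^\nu)$ (so the maximal function falls on $\nabla u^\nu$ rather than $\nabla u$); both work since the flows involved are measure-preserving and the Calder\'on--Zygmund bounds are uniform in $\nu$.
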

\begin{rem}\label{rem:equi}
The assumption that $(u,\omega)$ is Lagrangian is not restrictive. Indeed, if $p\geq 2$ every distributional solution of the Euler equations is renormalized \cite{FMN} and if $p\in[1,2)$ every solutions obtained as a limit of vanishing viscosity is renormalized \cite{CS, CNSS}. Moreover, the uniqueness of the linear problem (\cite{DPL,CNSS}) implies that every renormalized solution is Lagrangian.  
\end{rem}
\begin{rem}
We note that the solution $(u,\omega)$ satisfies the following conservations
\begin{equation*}
\begin{aligned}
&\|\om(t)\|_{L^p}=\|\om_0\|_{L^p},\quad &\|u(t)\|_{L^2}=\|u_0\|_{L^2},
\end{aligned}
\end{equation*}
where $u_0=\nabla^{\perp}(-\Delta)^{-1}\om_0$. Indeed, the conservation of the $L^p$-norm of the vorticity is a consequence of \eqref{eq:el1} and the fact that the flow $X_{t,0}(\cdot)$ is measure-preserving, while the conservation of the energy is one of the main results in \cite{CFLS}.
\end{rem}
\begin{rem}
Regarding the case $p=1$ we first notice that by following the same arguments in \cite{BC} and \cite{BBC2} we expect the strong convergence of the vorticity to hold in $C(L^1)$ by using the Lagrangian approach. The reason we did not include in Theorem \ref{teo:main2} the case $p=1$ is that we do not want to introduce more technical tools from Harmonic Analysis and we prefer to deal with this case with the Eulerian approach in the next section.  
\end{rem}

\begin{proof}[Proof of Theorem \ref{teo:main2}]
We divide the proof in several steps.\\
\\
\underline{Step 1}\hspace{0.5cm}{\em Weak convergence of the vorticity.}\\
\\
We prove that 
\begin{equation}\label{eq:weakconvom}
\omv\weaktos\om\quad\mbox{ weakly* in }L^{\infty}((0,T);L^{p}(\T)). 
\end{equation}
Since $(\uv,\omv)$ solves \eqref{eq:ns}-\eqref{eq:e2}, by standard $L^{p}$-estimates for the advection-diffusion equation satisfied by $\omv$ we have that 
\begin{equation}\label{eq:apriori}
\{\omv\}_{\nu}\mbox{ is bounded in }L^{\infty}((0,T);L^{p}(\T)).
\end{equation}
Since $p\in(1,\infty)$ there exists $\bar{\omega}\in L^{\infty}((0,T);L^{p}(\T))$ such that 
\begin{equation*}
\omv\weaktos\bar{\om}\quad\mbox{ weakly* in }L^{\infty}((0,T);L^{p}(\T)),
\end{equation*}  
and, by using \eqref{eq:convl}, 
\begin{equation*}
\curl\,u=\bar{\omega}
\end{equation*}
in the sense of distribution. Since $\curl\,u=\omega$ in the sense of distributions we conclude that $\omega=\bar{\omega}$ and \eqref{eq:weakconvom} is proved.\\
\\
\underline{Step 2}\hspace{0.5cm}{\em Strong convergence of the velocity.}\\
\\
We start by noticing that by \eqref{eq:convl} we have that 
\begin{equation}\label{eq:apriori}
\begin{aligned}
&\{\uv\}_{\nu}\mbox{ is bounded in }L^{\infty}((0,T);L^{2}(\T)),\\
&\{\omv\}_{\nu}\mbox{ is bounded in }L^{\infty}((0,T);L^{p}(\T)).
\end{aligned}
\end{equation}
 By Calder\'on-Zygmund theorem we have that 
\begin{equation*}
\|\nabla\uv(t)\|_{L^p}\leq C\|\omv(t)\|_{L^p}.
\end{equation*}
Therefore, by \eqref{eq:apriori}, we get that 
\begin{equation}\label{eq:boundnabla}
\{\nabla\uv\}_{\nu}\mbox{ is bounded in }L^{\infty}((0,T);L^{p}(\T)).
\end{equation}
Next, since $(\uv,\omv)$ solve the Navier-Stokes equations in the classical sense, we also have that 
\begin{equation*}\label{eq:nsvel}
\partial_t\uv+\dive(\uv\otimes\uv)-\nu\Delta\uv+\nabla p^{\nu}=0
\end{equation*}
where $p^{\nu}$ has zero-average and solves 
\begin{equation*}
-\Delta p^{\nu}=\dive(\dive(\uv\otimes\uv)).
\end{equation*}
Therefore, by using \eqref{eq:apriori} and \eqref{eq:boundnabla}, we have that for some $s$ large enough
\begin{equation*}
\{\partial_t\uv\}_{\nu}\mbox{ is bounded in }L^{\infty}((0,T);H^{-s}(\T)).
\end{equation*}
Then, by Aubin-Lions lemma we conclude that 
\begin{equation*}
\uv\to u\quad\mbox{ strongly in }C([0,T];L^{2}(\T)).
\end{equation*}
\\
\underline{Step 3}\hspace{0.5cm}{\em Comparison of the flows.}\\
\\
Since $(u,\om)$ is Lagrangian, by Definition \ref{def:el} there exists a regular Lagrangian flow $X$. Then, for $\mathbb{P}$-a.e. $\xi\in\Omega$, for a.e. $x\in\T$, and for any fixed $t\in(0,T)$ the following S.D.E. is satisfied for   
$s\in[0,t]$:
\begin{equation}\label{eq:difference}
\begin{cases}
\de(X^{\nu}_{t,s}(x,\xi)-X_{t,s}(x))=(\uv(s, X^{\nu}_{t,s}(x,\xi))-u(s,X_{t,s}(x)))\,\de s+\sqrt{2\nu}\de W_{s}(\xi),\\
X^{\nu}_{t,t}(x,\xi)-X_{t,t}(x)=0.
\end{cases}
\end{equation}
We define the function $q_{\e}(y)=\ln\left(1+\frac{|y|^2}{\e^2}\right)$ and the related functional
$Q^\e_\nu(t,s)$ as
$$
Q^\e_\nu(t,s):=q_{\e}(X^\nu_{t,s}-X_{t,s})=\ln\left(1+\frac{|X^\nu_{t,s}-X_{t,s}|^2}{\e^2}\right),
$$
where $\e>0$ is a fixed parameter that will be chosen later and we have omitted the explicit dependence on $x\in\T$ and $\xi\in\Omega$. By using It\^o's formula we get that
\begin{align*}
\int_{\T} \E\left[Q^\e_\nu(t,s)\right]\de x&=\int_s^t\int_{\T} \E\left[\nabla_{y} q_{\e}(X^\nu_{t,\tau}-X_{t,\tau})\cdot\left(u^\nu(s,X^\nu_{t,\tau})-u(\tau,X_{t,\tau}) \right)\right]\de x\de \tau\\ 
&+\nu\int_s^t\int_{\T} \E\left[\nabla_{y}^2 q_{\e}(X^\nu_{t,\tau}-X_{t,\tau})\right]\de x\de \tau,
\end{align*}
and from the inequalities
$$
\left| \nabla \ln\left(1+\frac{|y|^2}{\e^2} \right)\right|\leq\frac{C}{\e+|y|}, \hspace{0.7cm}\left| \nabla^2 \ln\left(1+\frac{|y|^2}{\e^2}\right)\right|\leq\frac{C}{\e^2+|y|^2},
$$
we obtain that
\begin{equation}\label{eq:219}
\int_{\T} \E\left[Q^\e_\nu(t,s)\right]\de x\leq \frac{C\nu (t-s)}{\e^2}+C\int_s^t\int_{\T} \E\left[\frac{\left| u^\nu(s,X^\nu_{t,\tau})-u(\tau,X_{t,\tau}) \right|}{\e+\left| X^\nu_{t,\tau}-X_{t,\tau}\right|}\right]\de x\de \tau.
\end{equation}
After adding and subtracting $u(s,X^\nu_{t,\tau})$ in the numerator of the second term on the right hand side of \eqref{eq:219} we estimate the resulting terms as follows:
\begin{equation}
\int_s^t\int_{\T} \E\left[\frac{\left| u^\nu(s,X^\nu_{t,\tau})-u(s,X^\nu_{t,\tau}) \right|}{\e+\left| X^\nu_{t,\tau}-X(s,x)\right|}\right]\de x\de \tau\leq \frac{1}{\e}\int_s^t\int_{\T}\mathbb{E}\left[\left| u^\nu(s,X^\nu_{t,\tau})-u(s,X^\nu_{t,\tau}) \right|\right] \de x \de \tau,
\end{equation}
\begin{equation}\label{eq:max1}
\begin{aligned}
\int_s^t\int_{\T} \E\left[\frac{\left| u(s,X^\nu_{t,\tau})-u(\tau,X_{t,\tau}) \right|}{\e+\left| X^\nu_{t,\tau}-X_{t,\tau}\right|}\right]\de x\de \tau&\leq C\int_s^t\int_{\T} \E\left[\mathcal{M}|\nabla u|(s,X^\nu_{t,\tau})\right]\de x \de \tau\\
&+C\int_s^t\int_{\T} \mathcal{M}|\nabla u|(s,X_{t,\tau})\de x \de \tau.
\end{aligned}
\end{equation}
Above, we have used the following maximal inequality 
\begin{equation*}
|u(s,x)-u(s,y)|\leq C\mathsf{d}(x,y)\left(\mathcal{M}|\nabla\,u|(s,\cdot)(x)+\mathcal{M}|\nabla\,u|(s,\cdot)(y)\right),\end{equation*}
 for a.e. $x,y\in\T$ and $s\in(0,T)$. To estimate the right-hand side of \eqref{eq:max1} we use that  $X_{t,\tau}$ and $X^\nu_{t,\tau}$ are measure preserving, that $\mathsf{d}(x,y)\leq |x-y|$, that the maximal function operator is continuous on $L^{q}(\T)$ for $q>1$, and that $\T$ has finite measure.  In this way we obtain 
\begin{equation*}
\int_s^t\int_{\T} \E\left[\frac{\left| u(s,X^\nu_{t,\tau})-u(\tau,X_{t,\tau}) \right|}{\e+\left| X^\nu_{t,\tau}-X_{t,\tau}\right|}\right]\de x\de \tau\leq C\|\nabla\,\uv\|_{L^{1}(L^q)}.
\end{equation*}
In conclusion, we finally get that
\begin{equation}
\int_{\T} \E\left[Q^\e_\nu(t,s)\right]\de x\leq C\left( \frac{\nu (t-s)}{\e^2}+\frac{1}{\e}\|u^\nu-u\|_{L^1(L^1)}+\|\nabla u\|_{L^1(L^q)}\right).
\end{equation}
Next, note that 
\begin{align}
\label{est:stab}
\left(\mathscr{L}^2\otimes\mathbb{P}\right)&\left(\{(x,\xi)\in \T\times\Omega:\mathsf{d}(X^\nu_{t,s}(x,\xi),X_{t,s}(x))> \sqrt{\e}\}\right)\nonumber\\ &\leq \frac{C}{|\ln\e|}\int_{\T} \E\left[\ln\left(1+\frac{(\mathsf{d}(X^\nu_{t,s}(x,\xi),X_{t,s}(x)))^2}{\e^2}\right)\right]\de x\\
\nonumber&\leq \frac{C}{|\ln\e|}\int_{\T} \E\left[Q^\e_\nu(t,s)\right]\de x\\
\nonumber&\leq C\left( \frac{\nu (t-s)}{\e^2|\ln\e|}+\frac{1}{\e|\ln \e|}\|u^\nu-u\|_{L^1(L^1)}+\frac{1}{|\ln\e|}\|\nabla u\|_{L^1(L^q)}\right),
\end{align}
where we have used that $x,y\in\T$, $\mathsf{d}(x,y)\leq |x-y|$, and that the function  $z\to\log(1+\frac{z^2}{\e^2})$ is increasing on $[0,\infty)$. Therefore, 
\begin{align}
\label{eq:split}
\int_{\T}\mathbb{E}[\mathsf{d}(X^\nu_{t,s}(x,\xi),X_{t,s}(x))]\de x &=\int_{\{(x,\xi)\in\T\times\Omega: \,\,\mathsf{d}(X^\nu_{t,s}(x,\xi),X_{t,s}(x))\leq \sqrt{\e}\}}\mathsf{d}(X^\nu_{t,s}(x,\xi),X_{t,s}(x))\de \mathbb{P}\de x\nonumber\\
&+\int_{\{(x,\xi)\in\T\times\Omega:\,\, \mathsf{d}(X^\nu_{t,s}(x,\xi),X_{t,s}(x))>\sqrt{\e}\}}\mathsf{d}(X^\nu_{t,s}(x,\xi),X_{t,s}(x))\de \mathbb{P}\de x\\
\nonumber&\lesssim \sqrt{\e}+\left(\mathscr{L}^2\otimes\mathbb{P}\right)\left(\{(x,\xi)\in \T\times\Omega:\mathsf{d}(X^\nu_{t,s}(x,\xi),X_{t,s}(x))> \sqrt{\e}\}\right),
\end{align}
where we have used that $\mathscr{L}^2 \otimes\mathbb{P}$ is a probability measure on $\T\times\Omega$ and that  the distance $\mathsf{d}$ on the torus is bounded. We first choose  $\e=\e(\nu):=\max\{\sqrt{\nu},\|u^\nu-u\|_{L^1(L^1)}\}$ and we use \eqref{est:stab} in \eqref{eq:split}. Noticing that there exists $\nu_0>0$ such that for every $\nu\leq \nu_0$ it holds $\sqrt{\e(\nu)}\leq \frac{1}{|\ln \e(\nu)|}$ we conclude that  
\begin{equation}
\label{eq:exp_dist}
\int_{\T}\mathbb{E}[\mathsf{d}(X^\nu_{t,s}(x,\xi),X_{t,s}(x))]\de x\leq \sqrt{\e(\nu)}+C\frac{(t-s)+1}{|\ln\e(\nu)|}\leq \frac{CT}{|\ln\e(\nu)|}.
\end{equation}

\noindent\underline{Step 4}\hspace{0.5cm}{\em Strong convergence of the vorticity.}\\
\\
Let $n\in\N$ and $\{\omega_0^{n}\}_{n}$ be a sequence of Lipschitz approximations of $\omega_0$. 
For any $t\in(0,T)$, by using Jensen's inequality, we have that
\begin{equation*}
\begin{aligned}
\|\omv(t)-\om(t)\|_{L^p}&=\|\mathbb{E}[\omv_0(X_{t,0}^{\nu})]-\om_0(X_{t,0})\|_{L^p}\\
&\leq\left(\int_{\T}\int_\Omega |\omv_0(X^\nu_{t,0})-\om_0(X^\nu_{t,0})|^p\de\mathbb{P}\de x\right)^{\frac{1}{p}}\\
&+\left(\int_{\T}\int_\Omega |\om_0^n(X^\nu_{t,0})-\om_0(X^\nu_{t,0})|^p\de\mathbb{P}\de x\right)^{\frac{1}{p}}\\
&+\left(\int_{\T}|\om_0^n(X_{t,0})-\om_0(X_{t,0})|^p\de x\right)^{\frac{1}{p}}\\
&+\left(\int_{\T}\int_\Omega |\om_0^n(X^\nu_{t,0})-\om_0^n(X_{t,0})|^p\de\mathbb{P}\de x\right)^{\frac{1}{p}}.
\end{aligned}
\end{equation*}
In particular, by using \eqref{eq:exp_dist} and that $\omega_0^n$ is Lipschitz we have 
\begin{equation*}
\begin{aligned}
\|\mathbb{E}[|\om^{n}_0(X_{t,0}^{\nu})-\om^{n}_0(X_{t,0})|]\|_{L^p}^p&\leq C_n\|\mathbb{E}[\mathsf{d}(X^\nu_{t,0},X_{t,0})]\|_{L^p}^{p}\\
&\leq \frac{C_n}{|\ln(\max\{\sqrt{\nu},\|\uv-u\|_{L^{1}(L^1)}\})|^p},
\end{aligned}
\end{equation*}
and then we get 
\begin{equation*}
\|\omv(t)-\om(t)\|_{L^p}\leq\|\omega_0^\nu-\omega_0\|_{L^p}+2\|\omega_0^n-\omega_0\|_{L^p}+\frac{C_n}{|\ln(\max\{\sqrt{\nu},\|\uv-u\|_{L^{1}(L^1)}\})|}.
\end{equation*}
Then, since $\uv$ converges to $u$ in $L^1((0,T);L^1(\T))$, sending first $\nu\to 0$ and then $n\to\infty$ it follows that $\omv\to \om$ strongly in $C([0,T];L^{p}(\T))$. The quantitative estimate \eqref{eq:quant} follows as well. 
\end{proof}

\subsection{Rate of covergence for bounded vorticity}
In this subsection we study the rate of convergence for bounded vorticity. We first recall the following result of J.-Y. Chemin \cite{Ch}.  
\begin{thm}\label{teo:chemin}
Let $\omega_0\in L^{\infty}(\T)$ and set $M:=\|\omega_0\|_{L^\infty}$. Let $(u,\omega)$ and $(\uv,\omv)$ be the unique  solutions on $(0,T)\times\T$ of the Euler and Navier-Stokes equations with the same initial datum $\omega_0$. Then, there exist $\nu_0=\nu_0(T, M)$ and $C=C(T, M)$ such that 
for any $\nu\leq \nu_0$ 
\begin{equation}\label{eq:chemin}
\sup_{t\in(0,T)}\|\uv(t)-u(t)\|_{L^2}\leq C\nu^{\frac{e^{-CT}}{2}}=:\delta_{\nu}^{M,T}.
\end{equation}
\end{thm}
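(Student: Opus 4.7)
The plan is to run a Yudovich--Chemin type $L^2$ energy argument on $v:=\uv-u$ and to close via an Osgood-type differential inequality.

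Subtracting the two momentum equations (in velocity form) gives
\[
\partial_t v + \uv\cdot\nabla v + v\cdot\nabla u + \nabla\pi = \nu\Delta \uv,\qquad v|_{t=0}=0,
\]
with $\pi=\pv-p$ and $\dive v=0$. Testing against $v$, the transport term and pressure vanish by incompressibility; handling $\nu\Delta \uv$ by integration by parts and Young's inequality, together with the bound $\|\nabla u\|_{L^2}\leq\|\omega\|_{L^2}\leq C M$ (which is uniform since $\|\omega(t)\|_{L^\infty}$ is conserved by the Euler flow and $\|\omv(t)\|_{L^\infty}\leq M$ follows from the maximum principle for \eqref{eq:vort_NS}), leads to
\[
\tfrac12\tfrac{d}{dt}\|v\|_{L^2}^2 \leq \left|\int v\otimes v:\nabla u\,\de x\right| + C M^2\nu.
\]

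The crux is a logarithmic bound on the nonlinear term in the spirit of Yudovich's uniqueness argument. For any $p\in[2,\infty)$, H\"older together with the Calder\'on--Zygmund estimate $\|\nabla u\|_{L^p}\leq Cp\|\omega\|_{L^p}\leq CpM$ (note the linear growth of the constant in $p$) yields
\[
\left|\int v\otimes v:\nabla u\,\de x\right|\leq CpM\,\|v\|_{L^{2p/(p-1)}}^2.
\]
Interpolating $L^{2p/(p-1)}$ between $L^2$ and $L^\infty$, and using the uniform bound $\|v\|_{L^\infty}\leq\|\uv\|_{L^\infty}+\|u\|_{L^\infty}\leq CM$ that follows from Biot--Savart applied to the bounded vorticities on $\T$, this becomes
\[
\left|\int v\otimes v:\nabla u\,\de x\right|\leq CpM\,\|v\|_{L^2}^{2(p-1)/p}(CM)^{2/p}.
\]
Optimizing by the admissible choice $p\sim \log(1/\|v\|_{L^2}^2)$ (legitimate as soon as $\|v\|_{L^2}$ is small, which by continuity holds on a nontrivial time interval since $v(0)=0$) produces the Yudovich-type bound $C(M)y(1+|\log y|)$ with $y:=\|v\|_{L^2}^2$, and consequently the differential inequality
\[
y'(t) \leq C_1(M)\,y(t)\bigl(1+|\log y(t)|\bigr) + C_2(M)\,\nu,\qquad y(0)=0.
\]

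Finally, setting $z:=y+\nu$ and using the monotonicity of $x\mapsto x(1+|\log x|)$ on $(0,1)$ to absorb the additive source into the nonlinear term, I reduce to $z'\leq C_3(M)\,z(1+|\log z|)$ with $z(0)=\nu$. Integrating by the substitution $w=1+|\log z|$ gives $1+|\log z(t)|\geq(1+|\log\nu|)e^{-C_3(M)t}$, whence $z(t)\leq e\,\nu^{e^{-C_3(M)t}}$ for $\nu$ small enough; taking the square root then delivers $\sup_{t\in(0,T)}\|v(t)\|_{L^2}\leq C\nu^{e^{-CT}/2}$, which is precisely $\delta^{M,T}_\nu$. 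The main obstacle is the Yudovich interpolation in the middle step, where one must simultaneously exploit the linear-in-$p$ Calder\'on--Zygmund constant and the uniform $L^\infty$ control of $v$, and then optimize to recover the logarithmic modulus; the concluding Osgood integration is then routine and is exactly what produces the double-exponential rate.
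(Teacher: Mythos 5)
Your argument is correct and is essentially the classical Yudovich--Chemin proof: the paper does not prove Theorem \ref{teo:chemin} itself but imports it from \cite{Ch} (see also \cite[Lemma 4]{CDE}), and your $L^2$ energy estimate on $v=\uv-u$ with the $p$-optimized Calder\'on--Zygmund/interpolation bound and the Osgood integration is exactly that argument, transplanted to the torus as the paper indicates. The only point you pass over quickly is the standard justification that the weak (Yudovich) solution $u$ may be used in the energy identity, which is handled by mollification as in \cite{Ch}.
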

We remark that in \cite{Ch} the theorem is stated and proved when the domain is the entire space $\R^2$. The proof in \cite{Ch} works also in the case of the torus with minor changes. Notice that also a different proof of Theorem \ref{teo:chemin} is given in \cite[Lemma 4]{CDE} and a $log$-improvement of the rate has been obtained in \cite{Se}. 
The main theorem of this subsection is the following.
\begin{thm}\label{teo:rate}
Let $\omega_0\in L^{\infty}(\T)$ and set $M:=\|\omega_0\|_{\infty}$. Let $(u,\omega)$ and $(\uv,\omv)$ be the unique bounded solutions on $(0,T)\times\T$ of the Euler and Navier-Stokes equations with the same initial datum $\omega_0$. Then, there exists $\nu_0=\nu_0(T, M,\omega_0)$ and a continuous function $\phi_{\omega_0,p,M}:\R^+\to\R^+$ with $\phi_{\omega_0,p,M}(0)=0$, such that for any $1\leq p< \infty$
\begin{equation}\label{eq:rate}
\sup_{t\in (0,T)}\|\omv(t)-\om(t)\|_{L^p}\leq CM^{1-\frac{1}{p}}\max\{\phi_{\omega_0,p,M}(\delta^{M,T}_{\nu}), (\delta_{\nu}^{M,T})^{\frac{e^{-CT}}{2p}}\},
\end{equation}
where $\delta_{\nu}^{M,T}$ is defined in \eqref{eq:chemin}.
\end{thm}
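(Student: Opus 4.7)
The plan is to combine the stochastic Lagrangian flow comparison from Step 3 of the proof of Theorem \ref{teo:main2} with the quantitative velocity rate provided by Theorem \ref{teo:chemin}. A crucial simplification with respect to Theorem \ref{teo:main2} is that $(u,\omega)$ and $(u^\nu,\omega^\nu)$ share the same initial datum $\omega_0 \in L^\infty(\T)$, so Chemin's estimate immediately gives $\|u^\nu - u\|_{L^1((0,T);L^1(\T))} \lesssim T\delta^{M,T}_\nu$; moreover, since $\|\omega(t)\|_{L^\infty} \leq M$, Calder\'on-Zygmund yields $\|\nabla u\|_{L^1(L^q)} \lesssim TM$ for any fixed $q>1$, providing the input for the maximal-function term in \eqref{eq:max1}.

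I would then rerun the It\^o-based comparison with $q_\e(y) = \ln(1+|y|^2/\e^2)$, choosing $\e := \max\{\sqrt\nu, \|u^\nu-u\|_{L^1(L^1)}\}$, which is bounded above by $C\delta^{M,T}_\nu$ for $\nu$ small. In contrast to \eqref{eq:exp_dist}, I would keep the $\sqrt\e$ contribution in the splitting \eqref{eq:split} rather than absorbing it into the logarithm, obtaining
\begin{equation*}
\int_\T \mathbb{E}[\mathsf{d}(X^\nu_{t,0}, X_{t,0})]\,\de x \leq C\sqrt{\delta^{M,T}_\nu} + \frac{CT}{|\ln \delta^{M,T}_\nu|}.
\end{equation*}
Using the Feynman-Kac representation, Jensen's inequality, and the elementary bound $|a-b|^p \leq (2M)^{p-1}|a-b|$ valid when $|a|,|b|\leq M$, the $L^p$ estimate reduces to
\begin{equation*}
\|\omega^\nu(t)-\omega(t)\|_{L^p}^p \leq (2M)^{p-1}\int_\T \mathbb{E}\bigl[|\omega_0(X^\nu_{t,0})-\omega_0(X_{t,0})|\bigr]\,\de x.
\end{equation*}

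Approximating $\omega_0$ by Lipschitz functions $\omega_0^n$ with $\|\omega_0^n\|_\infty \leq M$, Lipschitz constant $L_n$, and $\|\omega_0^n-\omega_0\|_{L^1} \to 0$, and exploiting that both flows are measure-preserving, the triangle inequality yields
\begin{equation*}
\int_\T \mathbb{E}[|\omega_0(X^\nu)-\omega_0(X)|]\,\de x \leq 2\|\omega_0^n-\omega_0\|_{L^1} + L_n\int_\T \mathbb{E}[\mathsf{d}(X^\nu,X)]\,\de x.
\end{equation*}
Optimizing over $n$ naturally produces a continuous, concave, monotone function $\phi_{\omega_0}(r):=\inf_n\{\|\omega_0^n-\omega_0\|_{L^1}+rL_n\}$ with $\phi_{\omega_0}(0)=0$ (by density of Lipschitz functions in $L^1(\T)$). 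Combining the two previous estimates, taking the $p$-th root, and repackaging $\phi_{\omega_0}$ as a continuous function of $\delta^{M,T}_\nu$ itself (rather than of $1/|\ln\delta^{M,T}_\nu|$) yields a bound of the shape $CM^{1-1/p}\max\{\phi_{\omega_0,p,M}(\delta^{M,T}_\nu),(\delta^{M,T}_\nu)^{e^{-CT}/(2p)}\}$: the first term reflects the logarithmic/modulus-of-continuity contribution and the second the polynomial contribution inherited from $\sqrt{\delta^{M,T}_\nu}$, with the final exponent obtained by tracing Chemin's exponent $e^{-CT}/2$ through the outer $1/p$ power. The main obstacle is that without any regularity on $\omega_0$ beyond $L^\infty$ no explicit rate can be attached to $\phi_{\omega_0,p,M}$---only qualitative continuity at zero coming from density of Lipschitz functions---so the only quantitative piece of the bound is the polynomial term in the $\max$.
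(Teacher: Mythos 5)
Your overall architecture (stochastic flow comparison, Feynman--Kac, Lipschitz approximation of $\omega_0$, interpolation between $L^1$ and $L^\infty$) matches the paper's, and your vorticity step --- the bound $|a-b|^p\leq(2M)^{p-1}|a-b|$, the measure-preserving property, and the concave modulus $\phi_{\omega_0}(r)=\inf_n\{\|\omega_0^n-\omega_0\|_{L^1}+rL_n\}$ --- is a sound (arguably more careful) version of the paper's Step 2. The divergence, and the gap, is in the flow comparison. You recycle the logarithmic functional $q_\e$ from the proof of Theorem \ref{teo:main2} and obtain only $\int_\T\mathbb{E}[\mathsf{d}(X^\nu_{t,0},X_{t,0})]\de x\lesssim\sqrt{\delta_\nu^{M,T}}+1/|\ln\delta_\nu^{M,T}|$. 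The paper instead exploits $\omega\in L^\infty$ in an essential way: it applies It\^o's formula to $|X^\nu_{t,s}-X_{t,s}|^2$ directly (no logarithm), uses H\"older with exponent $p$ together with the Calder\'on--Zygmund bound $\|\nabla u\|_{L^p}\leq C\,p\,M$ whose constant grows only linearly in $p$, chooses $p=2-\ln y$, and invokes the Osgood Lemma \ref{lem:osgood} with $\mu(x)=x(2-\ln x)$ (Chemin's trick). This upgrades the logarithmic rate to the \emph{polynomial} rate $\int_\T\mathbb{E}[|X^\nu_{t,s}-X_{t,s}|^2]\de x\leq C(\delta_\nu^{M,T})^{e^{-CT}}$, which is the actual content of the theorem and is absent from your proposal.

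The consequence is twofold. First, the max structure in \eqref{eq:rate} does not emerge from your argument as you describe: in your scheme the $\sqrt{\delta_\nu^{M,T}}$ contribution sits \emph{inside} the argument of $\phi_{\omega_0}$ (multiplied by $L_n$ before the infimum is taken), and since $1/|\ln\delta_\nu^{M,T}|\gg\sqrt{\delta_\nu^{M,T}}$ it is dominated there anyway; your final bound is $CM^{1-1/p}\bigl(\phi_{\omega_0}(CT/|\ln\delta_\nu^{M,T}|)\bigr)^{1/p}$ with no standalone polynomial term, so your closing claim that ``the only quantitative piece of the bound is the polynomial term'' is not what your estimate delivers. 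In the paper the polynomial term comes from a Chebyshev splitting of the vorticity integral at threshold $\e=(\delta_\nu^{M,T})^{e^{-CT}/4}$ against the second-moment bound above, at cost $\tfrac{2M}{\e^2}(\delta_\nu^{M,T})^{e^{-CT}}\sim M(\delta_\nu^{M,T})^{e^{-CT}/2}$ --- a step only available after the Osgood upgrade. Second, while your bound does formally verify \eqref{eq:rate} (the statement only requires \emph{some} continuous $\phi$ vanishing at $0$, and the max can only enlarge the right-hand side), it is qualitatively weaker: your $\phi$ is evaluated at $1/|\ln\delta_\nu^{M,T}|\sim1/|\ln\nu|$, whereas the paper's is evaluated at a power of $\nu$, so that for $\omega_0$ with, say, a H\"older modulus of continuity in $L^1$ the paper's proof yields an algebraic rate in $\nu$ and yours does not. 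To capture the intended result you must replace the logarithmic functional by the Osgood/Chemin iteration in the flow-comparison step.
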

Before giving the proof of Theorem \ref{teo:rate} we recall the following version of  Osgood lemma, see \cite{Ch}.
\begin{lem}\label{lem:osgood}
Let $\rho$ be a positive Borel function, $\gamma$ a locally integrable positive function, and $\mu$ a continuous increasing function. Assume that, for some strictly positive number $\alpha$, the function $\rho$ satisfies
$$
\rho(t)\leq \alpha+\int_t^{t_0}\gamma(s)\mu(\rho(s))\de s.
$$
Then we have that
$$
-\mathfrak{M}(\rho(t))+\mathfrak{M}(\alpha)\leq \int_t^{t_0}\gamma(s)\de s, \hspace{0.3cm}\mbox{  with }\mathfrak{M}(x)=\int_x^1\frac{1}{\mu(s)}\de s.
$$
\end{lem}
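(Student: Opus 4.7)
The plan is to use a standard majorant-regularization argument: although $\rho$ itself may be only Borel and is only controlled by an integral inequality, we can replace it by a nice absolutely continuous function that dominates it and on which the computation is a genuine ODE manipulation.

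Concretely, I would introduce
\[
R(t):=\alpha+\int_t^{t_0}\gamma(s)\mu(\rho(s))\de s,
\]
and observe immediately that $\rho(t)\le R(t)$ by hypothesis, $R(t_0)=\alpha$, and $R(t)\ge\alpha>0$. Since $\gamma\mu(\rho)\in L^1_{\mathrm{loc}}$ (this is implicit in the hypothesis that the right-hand side defines a real number), $R$ is absolutely continuous with $R'(t)=-\gamma(t)\mu(\rho(t))$ for a.e.\ $t$. Because $\mu$ is increasing and $\rho(t)\le R(t)$, we have $\mu(\rho(t))\le \mu(R(t))$, hence
\[
\frac{d}{dt}\mathfrak{M}(R(t))=-\frac{R'(t)}{\mu(R(t))}=\frac{\gamma(t)\mu(\rho(t))}{\mu(R(t))}\le \gamma(t),
\]
where I used $\mathfrak{M}'(x)=-1/\mu(x)$ and the positivity of $\mu$ on the range of $R$ (ensured by $R\ge\alpha>0$ together with $\mu$ increasing and positive there). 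Integrating from $t$ to $t_0$ gives
\[
\mathfrak{M}(\alpha)-\mathfrak{M}(R(t))=\mathfrak{M}(R(t_0))-\mathfrak{M}(R(t))\le \int_t^{t_0}\gamma(s)\de s.
\]

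To conclude, I would use that $\mathfrak{M}$ is decreasing (again from $\mathfrak{M}'=-1/\mu<0$), which combined with $\rho(t)\le R(t)$ yields $\mathfrak{M}(\rho(t))\ge\mathfrak{M}(R(t))$, so $-\mathfrak{M}(\rho(t))\le -\mathfrak{M}(R(t))$. Substituting into the previous line gives exactly the claimed inequality
\[
\mathfrak{M}(\alpha)-\mathfrak{M}(\rho(t))\le \int_t^{t_0}\gamma(s)\de s.
\]

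The only delicate point I anticipate is the justification that the composition $\mu\circ\rho$ is integrable against $\gamma$ so that $R$ is well-defined and absolutely continuous, and that one may apply the chain rule to $\mathfrak{M}\circ R$ a.e.; this is where the Borel measurability of $\rho$, the local integrability of $\gamma$, and the continuity and monotonicity of $\mu$ together with the a priori bound $R\ge\alpha$ combine to make the differentiation of the composition legitimate via the standard absolutely-continuous calculus (since $\mathfrak{M}$ is $C^1$ on the interval $[\alpha,\infty)\cap(0,\infty)$ where $R$ takes values, and $R$ is absolutely continuous there). Everything else is a direct one-line manipulation.
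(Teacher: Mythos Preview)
Your argument is correct and is exactly the standard proof of the Osgood lemma: introduce the absolutely continuous majorant $R(t)=\alpha+\int_t^{t_0}\gamma\mu(\rho)$, differentiate $\mathfrak{M}\circ R$, use monotonicity of $\mu$ to bound the quotient by $\gamma$, integrate, and then pass from $R$ back to $\rho$ via the monotonicity of $\mathfrak{M}$. The paper does not actually supply a proof of this lemma---it is only recalled from \cite{Ch}---so there is nothing to compare against; your write-up would serve perfectly well as the missing proof.
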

\begin{proof}[Proof of Theorem \ref{teo:rate}]
We divide the proof in several steps.\\
\\
\underline{Step 1} \hspace{0.5cm}{\em Rate on the difference of the flows.}\\
\\
Let $X^\nu_{t,s}, X_{t,s}$ be respectively the solutions of \eqref{eq:nsl1} and \eqref{eq:e}. By It\^o's formula we have that
\begin{equation}\label{eq:ito2}
\begin{aligned}
\frac{|X^\nu_{t,s}-X_{t,s}|^2}{2}&=\int_s^t [(u^\nu(\tau,X^\nu_{t,\tau})-u(\tau,X_{t,\tau}))\cdot (X^\nu_{t,\tau}-X_{t,\tau})+2\nu]\de \tau\\&+\sqrt{2\nu}\int_s^t (X^\nu_{t,\tau}-X_{t,\tau})\cdot\de W_\tau.
\end{aligned}
\end{equation}
Next, we have the simple estimate
\begin{align*}
|(u^\nu(\tau,X^\nu_{t,\tau})-u(\tau,X_{t,\tau}))\cdot (X^\nu_{t,\tau}-X_{t,\tau})|\leq& |u^\nu(\tau,X^\nu_{t,\tau})-u(\tau,X^\nu_{t,\tau})||X^\nu_{t,\tau}-X_{t,\tau}|\\
&+|u(\tau,X^\nu_{t,\tau})-u(\tau,X_{t,\tau})||X^\nu_{t,\tau}-X_{t,\tau}|\\
\leq& \frac{|u^\nu(\tau,X^\nu_{t,\tau})-u(\tau,X^\nu_{t,\tau})|^2}{2}+\frac{|X^\nu_{t,\tau}-X_{t,\tau}|^2}{2}\\
&+ C \frac{|X^\nu_{t,\tau}-X_{t,\tau}|^2}{2}\left(\mathcal{M}|\nabla u(\tau,\cdot)|( X^\nu_{t,\tau})+ \mathcal{M}|\nabla u(\tau,\cdot)|( X_{t,\tau})\right).
\end{align*}
Then, taking the expected value and integrating in space, we can estimate \eqref{eq:ito2} as follows
\begin{align*}
\int_{\T}\int_{\Omega}&\frac{|X^\nu_{t,s}-X_{t,s}|^2}{2}\de \mathbb{P}\de x \\ &\leq 2\nu(t-s)+\int_s^t\int_{\T}\int_{\Omega}\frac{|u^\nu(\tau,X^\nu_{t,\tau})-u(\tau,X^\nu_{t,\tau})|^2}{2}\de \mathbb{P}\de x\de \tau\\
&+\int_s^t \left(\int_\Omega\int_{\T} \mathcal{M}|\nabla u(\tau,\cdot)|( X^\nu_{t,\tau})^p\de x\de \mathbb{P} \right)^{\frac{1}{p}}\left( \int_{\T}\int_{\Omega}|X^\nu_{t,\tau}-X_{t,\tau}|^{\frac{2p}{p-1}}\de \mathbb{P}\de x \right)^{\frac{p-1}{p}} \de \tau\\
&+\int_s^t\left(\int_{\T} \mathcal{M}|\nabla u(\tau,\cdot)|( X_{t,\tau})^p\de x \right)^{\frac{1}{p}}\left(\int_{\T}\int_{\Omega}|X^\nu_{t,\tau}-X_{t,\tau}|^{\frac{2p}{p-1}} \de \mathbb{P}\de x\right)^{\frac{p-1}{p}}\de \tau\\
&+ \int_s^t\int_{\T}\int_{\Omega} \frac{|X^\nu_{t,\tau}-X_{t,\tau}|^2}{2} \de \mathbb{P}\de x\de \tau.
\end{align*}
We recall that by Calder\'on-Zygmund theorem we have that for $p<\infty$ large
\begin{equation}\label{eq:constcz}
\|\nabla\uv(t)\|_{L^p}\leq C\,p\|\omega(t)\|_{L^p}.
\end{equation}
Therefore, by using the measure-preserving property of $X^\nu_{t,s}$ and $X_{t,s}$, the boundedness of the flows and the fact that the maximal function is bounded in $L^{p}(\T)$ for any $1<p\leq\infty$, we obtain that
\begin{align*}
\int_{\T}\int_\Omega|X^\nu_{t,s}-X_{t,s}|^2\de \mathbb{P}\de x &\leq \left(4\nu+\|u^\nu-u\|_{L^\infty(L^2)}^2\right)(t-s)+\int_s^t\int_{\T}\int_\Omega|X^\nu_{t,\tau}-X_{t,\tau}|^2 \de \mathbb{P}\de x\de \tau\\
&+C\int_s^t\|\nabla u(\tau,\cdot)\|_{L^p}\left( \int_{\T}\int_\Omega|X^\nu_{t,\tau}-X_{t,\tau}|^2 \de \mathbb{P}\de x\right)^{\frac{p-1}{p}}\de \tau\\
&\leq \left(4\nu+\|u^\nu-u\|_{L^\infty(L^2)}^2\right)(t-s)+\int_s^t\int_{\T}\int_\Omega|X^\nu_{t,\tau}-X_{t,\tau}|^2 \de \mathbb{P}\de x\de \tau\\
&+C\,M\,p\int_s^t\left( \int_{\T}\int_\Omega|X^\nu_{t,\tau}-X_{t,\tau}|^2 \de \mathbb{P}\de x\right)^{\frac{p-1}{p}}\de \tau,
\end{align*}
where we have used \eqref{eq:constcz} and the bound in $L^{\infty}(\T)$ on the vorticity. Therefore, if we define
$$
y_\nu(t,s):=\int_{\T}\int_\Omega|X^\nu_{t,s}-X_{t,s}|^2\de \mathbb{P}\de x, \hspace{0.4cm}\alpha_\nu^T:=\left(4\nu+\|u^\nu-u\|_{L^\infty(L^2)}^2\right)T,
$$
for any $s,t\in(0,T)$ with $s<t$, we can rewrite the above estimate as
\begin{equation}
\begin{cases}
y_\nu(t,s)\leq \alpha_\nu^T+\displaystyle\int_s^t (y_\nu(t,\tau)+Cpy_\nu(t,\tau)^{1-\frac{1}{p}})\de \tau,\\
y_\nu(t,t)=0,
\end{cases}
\end{equation}
where the constant $C$ depends on $M$ and we have used \eqref{eq:constcz}. Moreover, by \eqref{eq:chemin} we can estimate
$$
\alpha_\nu^T\leq C\delta_{\nu}^{M,T},
$$
and we get that
\begin{equation}
\begin{cases}
y_\nu(t,s)\leq C\delta_{\nu}^{M,T}+\displaystyle\int_s^t (y_\nu(t,\tau)+Cpy_\nu(t,\tau)^{1-\frac{1}{p}})\de \tau,\\
y_\nu(t,t)=0.
\end{cases}
\end{equation}
At this point we can argue as in \cite{Ch}: we choose $p=2-\ln(y^\nu(t,\tau))$ and since we can assume $y_\nu<1$, we get that
\begin{align*}
y_\nu(t,s)&\leq C\delta_{\nu}^{M,T}+\displaystyle\int_s^t y_\nu(t,\tau)+C(2-\ln(y^\nu(t,\tau))y_\nu(t,\tau)^{1-\frac{1}{2-\ln(y^\nu(t,\tau))}}\de \tau\\
&\leq C\delta_{\nu}^{M,T}+C\displaystyle\int_s^t(2-\ln(y^\nu(t,\tau)))y_\nu(t,\tau)\de \tau.
\end{align*}
Then, by using Lemma \ref{lem:osgood} with
$$
\rho(s):=y_\nu(t,s),\hspace{0.5cm}\alpha:=C\delta_{\nu}^{M,T},\hspace{0.5cm}\gamma(x):=C
$$
$$
\mu(x):=x(2-\ln x),\hspace{0.5cm}\mathfrak{M}(x):=\ln(2-\ln x)-\ln 2,
$$
we obtain that
\begin{equation}
-\ln(2-\ln y_\nu(t,s))+\ln(2-\ln\delta_\nu^{M,T})\leq C(t-s) ,
\end{equation}
which implies that
\begin{equation}\label{eq:rate_y_nu}
y_\nu(t,s)\leq \exp\left(2-2e^{-c(t-s)}\right)\left(\delta_\nu^{M,T}\right)^{e^{-C(t-s)}}\leq C\left(\delta_\nu^{M,T}\right)^{e^{-CT}},
\end{equation}
or in other words
\begin{equation}\label{eq:rate_flow}
\int_{\T}\mathbb{E}[\mathsf{d}(X^\nu_{t,s},X_{t,s})^2]\leq\int_{\T}\mathbb{E}[|X^\nu_{t,s}-X_{t,s}|^2]\de x\leq C\left(\delta_\nu^{M,T}\right)^{e^{-CT}}.
\end{equation}
\\
\underline{Step 2}\hspace{0.5cm}{\em Rate of convergence of the vorticities.}\\
\\
Since $\omega_0\in L^\infty(\T)\subset L^1(\T)$, we can use the continuity of the translation operator in $L^1(\T)$ to infer that
there exist $h_0$ and a modulus of continuity $\phi_{\om_0,M}$ such that 
\begin{equation}\label{eq:tran_1}
\|\omega_0(\cdot+h)-\omega_0(\cdot)\|_{L^1}\leq \phi_{\om_0,M}(|h|)\quad\mbox{for }|h|\leq h_0.
\end{equation}
Then we get
\begin{align*}
\|\omega^\nu(t)-\omega(t)\|_{L^1}&=\int_{\T}|\omega^\nu(t,x)-\omega(t,x)|\de x =\int_{\T}|\mathbb{E}[\omega_0(X^\nu_{t,0})]-\omega_0(X_{t,0})|\de x\\
&\leq \iint_{\{\mathsf{d}(X^\nu_{t,0},X_{t,0})\leq\e\}}|\omega_0(X^\nu_{t,0})-\omega_0(X_{t,0})|\de \mathbb{P}\de x\\
&+\iint_{\{\mathsf{d}(X^\nu_{t,0},X_{t,0})>\e\}}|\omega_0(X^\nu_{t,0})-\omega_0(X_{t,0})| \de \mathbb{P}\de x\\
&\leq \phi_{\omega_0,M}(\e)+\frac{2M}{\e^2}\int_{\T}\mathbb{E}[\mathsf{d}(X^\nu_{t,0},X_{t,0})^2]\de x\\
&\leq \phi_{\omega_0,M}(\e)+\frac{C}{\e^2}\left(\delta_\nu^{M,T}\right)^{e^{-CT}},
\end{align*}
where in the last two inequalities we have used \eqref{eq:tran_1} and then \eqref{eq:rate_flow}. Finally, to get \eqref{eq:rate} it is enough to choose
$$
\e(\nu)=\left(\delta_\nu^{M,T}\right)^{\frac{e^{-CT}}{4}},
$$
to take $\nu_0$ such that $\e(\nu)\leq h_0$ for $\nu\leq \nu_0$ and finally to interpolate $L^p$ between $L^1$ and $L^\infty$.
\end{proof}

\section{The Eulerian approach}
The section is organized as follows: first we recall the definition of renormalized solutions of the Euler equations. Then we prove some preliminary lemmas and finally we show the main result (Theorem \ref{teo:strong_convergence_vort}). 

\subsection{Renormalized solutions of the $2D$ Euler equations and main result}
We consider the Cauchy problem for the $2D$ Euler equations in $(0,T)\times\R^2$:
\begin{equation}\label{eq:er}
\begin{cases}
\partial_t \om+u\cdot\nabla\om=0,\\
u=K\ast\omega,\\
\om|_{t=0}=\om_0,
\end{cases}
\end{equation}
where $K:\R^2\to\R^2$ is the Biot-Savart kernel given by $K(x)=\displaystyle\frac{1}{2\pi}\frac{x^{\perp}}{|x|^2}$.\\ 
Next, let $\nu>0$ and consider the Cauchy problem for the $2D$ Navier-Stokes equations in $(0,T)\times\R^2$,
\begin{equation}\label{eq:nsr}
\begin{cases}
\partial_t \omv+\uv\cdot\nabla\omv-\nu\Delta\omv=0,\\
\uv=K\ast\omv,\\
\omv|_{t=0}=\om_0^\nu.
\end{cases}
\end{equation}
Renormalized solutions for the system \eqref{eq:er} are defined in analogy with the ones introduced by DiPerna-Lions \cite{DPL} for the linear transport equations.  
\begin{defn}[Renormalized solutions of the $2D$ Euler equations]\label{def:RSvort}
Let $\omega_0\in L_{c}^{p}(\R^2)$ and $\omega\in C([0,T];L^p(\R^2))$ with $1\leq p< \infty$. The pair $(u,\omega)$ is a renormalized solution of \eqref{eq:er} if for any $\beta\in C^1(\R)\cap L^\infty(\R)$ vanishing in a neighbourhood of zero it holds
\begin{equation}\label{eq:ren}
\int_0^T \int_{\R^2} \beta(\omega)(\partial_t\varphi+u\cdot\nabla\varphi) \de x \de t+\int_{\R^2} \beta(\omega_0)\varphi(0,x) \de x=0,
\end{equation}
for any $\varphi\in C^\infty_c([0,T)\times\R^2)$, and 
$$
u(t,x)=(K*\omega(t,\cdot))(x)\hspace{0.7cm}\mbox{a.e. in }(0,T)\times\R^2.
$$
\end{defn}
Note that if  $\omega\in C([0,T];L^p(\R^2))$  and $\beta$ is as in Definition \ref{def:RSvort} then the composition $\beta(\omega)\in L^\infty((0,T);L^1(\R^2)\cap L^\infty(\R^2))$, therefore \eqref{eq:ren} makes sense. We remark that, in general, the vorticity equations cannot be interpreted in distributional sense if $1\leq p<4/3$. 
The main theorem of this section is the following.
\begin{thm}\label{teo:strong_convergence_vort}
Let $p\in[1,\infty)$ and $\omega_0\in L_{c}^{p}(\R^2)$. Let $\{\omega_0^\nu\}_{\nu}$ be a sequence of smooth compactly supported functions such that there exists $R>0$ with $\mathrm{supp}$ $\omega_0^{\nu}\subset B_{R}(0)$ and 
\begin{equation*}
\begin{aligned}
&\{\omega_0^\nu\}_{\nu}\mbox{ is bounded in }L^{p}(\R^2)\cap H^{-1}_{\mathrm{loc}}(\R^2),\\
&\omega_0^\nu\to \omega_0\quad\mbox{ strongly in }L^{p}(\R^2).
\end{aligned}
\end{equation*}
Let $(\uv,\omv)$ be the solution of the Navier-Stokes equations with initial datum $\omega_0^{\nu}$. 
Assume that there exists $(u,\omega)$ renormalized solution of the Euler equations such that 
\begin{equation}\label{eq:conv}
\begin{aligned}
&\uv\weaktos u\hspace{0.3cm}\mbox{ weakly* in }L^{\infty}((0,T);L^2_{\mathrm{loc}}(\R^2)).\\
\end{aligned}
\end{equation}
Then,  
\begin{equation*}
\begin{aligned}
\omv\to \omega\hspace{0.3cm}\mbox{ strongly in }C([0,T];L^{p}(\R^2)).
\end{aligned}
\end{equation*}
\end{thm}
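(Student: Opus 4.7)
The plan is to combine a weak compactness argument in $C([0,T];L^p_w(\R^2))$ with a norm-convergence identity coming from renormalization, and then to upgrade to strong convergence via a Radon-Riesz type argument. First I would derive the uniform bounds: the Biot-Savart law and Calder\'on-Zygmund give that $\{\uv\}_\nu$ is bounded in $L^\infty((0,T);L^2_{\mathrm{loc}}(\R^2))$ and $\{\nabla\uv\}_\nu$ in $L^\infty((0,T);L^p_{\mathrm{loc}}(\R^2))$, while the $L^p$-estimate for the advection-diffusion equation satisfied by $\omv$ yields that $\{\omv\}_\nu$ is bounded in $L^\infty((0,T);L^p(\R^2))$. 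From the vorticity equation one then reads that $\{\partial_t\omv\}_\nu$ is bounded in $L^\infty((0,T);H^{-N}_{\mathrm{loc}}(\R^2))$ for some $N$, so that $\omv$ is equicontinuous in time with values in the weak topology of $L^p$. An Ascoli-Arzel\`a argument then produces a (not relabeled) subsequence converging in $C([0,T];L^p_w(\R^2))$, and the hypothesis \eqref{eq:conv} identifies the limit with $\omega$; a parallel application of Aubin-Lions upgrades \eqref{eq:conv} to strong convergence $\uv\to u$ in $C([0,T];L^2_{\mathrm{loc}}(\R^2))$.

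The heart of the argument will be the identity $\|\omv(t)\|_{L^p(\R^2)}\to\|\omega(t)\|_{L^p(\R^2)}$. For Navier-Stokes, multiplying the vorticity equation by $\beta'(\omv)$ and integrating gives, for any convex $\beta\in C^2(\R)\cap L^\infty(\R)$ vanishing near zero,
\begin{equation*}
\frac{d}{dt}\int_{\R^2}\beta(\omv)\de x=-\nu\int_{\R^2}\beta''(\omv)|\nabla\omv|^2\de x\leq 0,
\end{equation*}
so that $\int\beta(\omv(t))\leq\int\beta(\omv_0)$; approximating $|\cdot|^p$ by such $\beta$ yields $\|\omv(t)\|_{L^p(\R^2)}\leq\|\omv_0\|_{L^p(\R^2)}$. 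Since $(u,\omega)$ is renormalized, a parallel approximation produces the conservation $\|\omega(t)\|_{L^p(\R^2)}=\|\omega_0\|_{L^p(\R^2)}$. Using $\omega_0^\nu\to\omega_0$ strongly in $L^p$ one obtains $\limsup_\nu\|\omv(t)\|_{L^p(\R^2)}\leq\|\omega(t)\|_{L^p(\R^2)}$, and weak lower semicontinuity provides the matching $\liminf$, so that $\|\omv(t)\|_{L^p(\R^2)}\to\|\omega(t)\|_{L^p(\R^2)}$ for every $t\in[0,T]$.

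For $p\in(1,\infty)$ the strong convergence is then immediate: uniform convexity of $L^p$ (Radon-Riesz) converts pointwise-in-$t$ weak convergence together with norm convergence into pointwise-in-$t$ strong convergence, and the equicontinuity in the weak topology coming from the first step promotes this to convergence in $C([0,T];L^p(\R^2))$ via a standard extraction-by-contradiction argument on a putative blow-up sequence of times $t_\nu\to t^*$.

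The main obstacle is the case $p=1$, where $L^1$ lacks the Radon-Riesz property. My plan here is to approximate $\omega_0$ by bounded compactly supported data $\omega_0^{(n)}\in L^\infty\cap L^1$, apply the $p>1$ result to the corresponding limits to obtain strong convergence at the level of the approximations, and then to prove equi-integrability of $\{\omv(t)\}_\nu$ uniformly in $t$ and $\nu$. The equi-integrability should follow by choosing $\beta_M(s)=(|s|-M)_{+}$ (or a smooth convex version thereof) in the Navier-Stokes renormalized inequality above, which controls the tails $\int_{\{|\omv|>M\}}|\omv|\de x$ uniformly in $\nu$; a Vitali or Brezis-Lieb type argument will then convert weak $L^1$ convergence plus equi-integrability into the desired strong $L^1$ convergence.
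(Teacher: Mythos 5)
Your direct Radon--Riesz strategy on the nonlinear solutions is genuinely different from the paper's route (the paper mollifies the initial datum, compares $\omv$ with solutions of the \emph{linear} advection--diffusion and transport problems driven by the fixed fields $\uv$ and $u$, and proves a DiPerna--Lions type stability lemma for those linear problems), and for $p$ large it is arguably cleaner: the chain $\limsup_\nu\|\omv(t)\|_{L^p}\leq\lim_\nu\|\omega_0^\nu\|_{L^p}=\|\omega_0\|_{L^p}=\|\omega(t)\|_{L^p}$ combined with weak lower semicontinuity and uniform convexity is exactly the right mechanism, and your contradiction argument along $t_\nu\to t^*$ correctly yields uniformity in time. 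However, there is a gap in the range $1\leq p\leq 4/3$: your equicontinuity of $t\mapsto\int\omv(t)\varphi\,\de x$ (hence the convergence in $C([0,T];L^p_w)$ and, crucially, the pointwise-in-$t$ weak convergence $\omv(t)\weakto\omega(t)$ on which Radon--Riesz is applied) requires a uniform-in-$\nu$ bound on $\int\omv\,\uv\cdot\nabla\varphi\,\de x$. With $\omv$ only bounded in $L^\infty(L^1\cap L^p)$ and $\uv=K\ast\omv$, the product $\uv\omv$ is uniformly in $L^1_{\mathrm{loc}}$ only for $p>4/3$; below that threshold the vorticity equation is not even distributional (as the paper remarks after Definition \ref{def:RSvort}), and your first step breaks down. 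The paper sidesteps this precisely by running the Ascoli--Arzel\`a/renormalization argument only on the linear problems with \emph{mollified, hence bounded} data, where the drift-density product is controlled by (H2).

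The $p=1$ part of your plan has two further problems. First, ``apply the $p>1$ result to the corresponding limits'' is ambiguous: if the approximating problems are the \emph{nonlinear} Euler/Navier--Stokes flows with truncated data $\omega_0^{(n)}$, you have no stability estimate comparing $\omv$ with the solution issued from $\omega_0^{(n)}$ (that is the uniqueness problem in disguise). The comparison only closes if you keep the velocity fields $\uv$, $u$ fixed and change only the initial datum of the linear equation, so that the difference satisfies a linear equation with the same drift and the $L^p$-contraction gives $\|\omv(t)-\omega_n^\nu(t)\|_{L^p}\leq\|\omega_0^\nu-\omega_0^\nu\ast\psi_n\|_{L^p}$ --- but then you are back to needing the linear stability lemma (renormalization and uniqueness for the transport equation with drift $u$ having gradient given by a singular integral of an $L^1$ function, the $p=1$ case of (H1')). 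Second, the final implication ``weak $L^1$ convergence plus equi-integrability implies strong $L^1$ convergence'' is false (oscillating equi-integrable sequences converge weakly but not strongly); Vitali and Brezis--Lieb both require an additional a.e.\ or in-measure convergence that you have not produced. You also address only uniform integrability, not tightness at spatial infinity, which in the paper requires a separate cut-off argument using the splitting $K=K\chi_{B_1}+K\chi_{B_1^c}$ of the Biot--Savart kernel.
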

\subsection{A preliminary lemma}
Let us consider the Cauchy problem for the linear transport equation
\begin{equation}
\label{eq:TE}
\begin{cases}
\partial_t \rho+b\cdot\nabla\rho=0,\\
\rho(0,\cdot)=\rho_0,
\end{cases}
\end{equation}
where $\rho_0:\R^d\to\R$ is a given initial datum in $L^{1}(\R^d)\cap L^{\infty}(\R^d)$ and $b:[0,T]\times\R^d\to \R^d$ is a given vector field satisfying the following assumptions:
\begin{itemize}
\item[(H1)] $b\in L^1((0,T);W^{1,p}_{\mathrm{loc}}(\R^d))$ for some $p>1$;
\item[(H1')] $b\in L^1((0,T);L^p_{\mathrm{loc}}(\R^d))$ for some $p>1$ and $\nabla b=S*g$ where $S:\R^d\to\R^{d\times d}$ is a singular integral operator of fundamental type \cite{Ste} and $g\in L^1((0,T)\times\R^d))$;
\item[(H2)] $b\in L^\infty((0,T);L^1(\R^d))+L^\infty((0,T)\times\R^d)$;
\item[(H3)] $\dive b=0$ in the sense of distributions.
\end{itemize}
Under the above hypothesis the transport equation \eqref{eq:TE} admits a unique solution in the class of densities $\rho\in L^\infty((0,T);L^1(\R^d)\cap L^\infty(\R^d))$, which is also renormalized, see \cite{BC, DPL}. Moreover, the velocity field $u$ of the two-dimensional Euler equations \eqref{eq:er} with vorticity $\omega\in L^\infty((0,T);L^1(\R^2)\cap L^p(\R^2))$ satisfies the above assumptions. Indeed, by the Biot-Savart law the gradient of the velocity field is a singular integral operator applied to the vorticity $\omega$, therefore the velocity field satisfies (H1) for $p>1$ and (H1') for $p=1$.\\

Let $\nu>0$ and consider a sequence $\{\rho^\nu\}_{\nu}$ of solutions of the following advection-diffusion equation with vector field $b^{\nu}$ and initial datum $\rho_0^\nu$
 \begin{equation}
\label{eq:TEe}
\begin{cases}
\partial_t \rho^\nu+b^{\nu}\cdot\nabla\rho^\nu=\nu\Delta\rho^\nu,\\
\rho^\nu(0,\cdot)=\rho^\nu_0.
\end{cases}
\end{equation}
We assume that 
\begin{equation}\label{eq:h2}
\{b^{\nu}\}_{\nu}\mbox{ is bounded in }L^\infty((0,T);L^1(\R^d))+L^\infty((0,T)\times\R^d)
\end{equation}
and for some $m>1$
\begin{equation}\label{eq:convb}
b^{\nu}\to b\quad\mbox{ strongly in }L^{m}_{\mathrm{loc}}((0,T)\times\R^d).
\end{equation}
To avoid technicalities we assume that $b^{\nu}$ is smooth. Moreover, we assume that $\{\rho^{\nu}_{0}\}_{\nu}$ is such that 
\begin{equation}\label{eq:convrho0}
\begin{aligned}
&\rho^{\nu}_{0}\to\rho_0\quad\mbox{ strongly in }L^{1}(\R^d),\\
&\rho^{\nu}_{0}\weaktos\rho_0\quad\mbox{ weakly* in }L^{\infty}(\R^d).
\end{aligned}
\end{equation}

The following lemma is a combination of Theorem IV.1 and Theorem II.4 in \cite{DPL}, generalized also to the case of vector fields satisfying (H1') instead of (H1).  
\begin{lem}
\label{teo:strong_convergence_TE}
Let $\rho_0\in L^{1}(\R^d)\cap L^{\infty}(\R^d)$ and $\{\rho^\nu_0\}_{\nu}$ satisfying \eqref{eq:convrho0}. Let $b$ be a vector field which satisfies (H1) or (H1'), (H2), and (H3) and let the smooth vector field $b^{\nu}$ satisfy \eqref{eq:h2} and \eqref{eq:convb}. Then, the unique solutions $\rho^\nu,\rho\in L^\infty((0,T);L^1(\R^d)\cap L^\infty(\R^d))$ of \eqref{eq:TE} and \eqref{eq:TEe} satisfy
$$
\rho^\nu\to\rho\quad \mbox{ in } C([0,T];L^q(\R^d)), \mbox{ for all } 1\leq q<\infty.
$$
\end{lem}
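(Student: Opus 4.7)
My plan is to follow the strategy of DiPerna-Lions \cite{DPL} (Theorem II.4), extended to the vanishing-viscosity setting and to the singular-integral case (H1').

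First, I would establish weak-$*$ compactness and identify the limit. The maximum principle for \eqref{eq:TEe} (using smoothness of $b^\nu$ and $\dive b^\nu = 0$) gives $\|\rho^\nu(t)\|_{L^q} \leq \|\rho^\nu_0\|_{L^q}$ for every $q \in [1, \infty]$, so $\{\rho^\nu\}$ is bounded in $L^\infty((0,T); L^1(\R^d) \cap L^\infty(\R^d))$. Up to a subsequence, $\rho^\nu \weaktos \bar\rho$ weakly-$*$ in $L^\infty((0,T) \times \R^d)$. Passing to the limit in the weak formulation of \eqref{eq:TEe}, the bilinear term $b^\nu \rho^\nu$ converges to $b\bar\rho$ in $\mathcal{D}'$ thanks to the strong $L^m_{\mathrm{loc}}$ convergence in \eqref{eq:convb} paired with the weak-$*$ $L^\infty$ convergence of $\rho^\nu$, while $\nu \Delta \rho^\nu \to 0$ in $\mathcal{D}'$ since $\{\rho^\nu\}$ is bounded in $L^\infty$. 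Hence $\bar\rho$ is a distributional solution of \eqref{eq:TE}. Under (H1) the renormalization property of any bounded solution is the classical DiPerna-Lions result; under (H1') it is the result of \cite{BC}. Uniqueness of renormalized solutions then forces $\bar\rho = \rho$ and the whole sequence converges weakly-$*$.

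Second, I would obtain convergence of $L^q$-norms, uniformly in time. Since $\rho^\nu$ is smooth and $\dive b^\nu = 0$, for $q \in (1,\infty)$ one computes
\begin{equation*}
\frac{d}{dt} \int_{\R^d} |\rho^\nu(t)|^q \de x = -\nu q(q-1) \int_{\R^d} |\rho^\nu|^{q-2} |\nabla \rho^\nu|^2 \de x \leq 0
\end{equation*}
(and $\|\rho^\nu(t)\|_{L^1}$ is likewise nonincreasing by standard truncation), so $\|\rho^\nu(t)\|_{L^q}^q$ is monotone nonincreasing in $t$. Interpolating \eqref{eq:convrho0} between $L^1$ and $L^\infty$ gives $\|\rho_0^\nu\|_{L^q} \to \|\rho_0\|_{L^q}$, and renormalization of $\rho$ with $\beta(s) = |s|^q$ (approximated from the admissible class) yields $\|\rho(t)\|_{L^q} = \|\rho_0\|_{L^q}$ for every $t$. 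Combined with weak lower semicontinuity of the norm, the sandwich
\begin{equation*}
\|\rho^\nu(T)\|_{L^q}^q \leq \|\rho^\nu(t)\|_{L^q}^q \leq \|\rho^\nu_0\|_{L^q}^q
\end{equation*}
with both endpoints converging to $\|\rho_0\|_{L^q}^q$ forces $\|\rho^\nu(t)\|_{L^q} \to \|\rho(t)\|_{L^q}$ uniformly in $t \in [0,T]$.

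In the final step I would promote this to strong convergence in $C([0,T]; L^q)$. For every $\phi \in C_c^\infty(\R^d)$, the equation gives $\tfrac{d}{dt} \int \rho^\nu \phi \,\de x = \int \rho^\nu b^\nu \cdot \nabla \phi \,\de x + \nu \int \rho^\nu \Delta \phi \,\de x$, uniformly bounded in $\nu$ and $t$ by (H2) and the $L^1 \cap L^\infty$ bound; hence $t \mapsto \int \rho^\nu(t) \phi \,\de x$ is equicontinuous, and Ascoli-Arzelà with density yields $\rho^\nu \to \rho$ in $C([0,T]; L^q_{\mathrm{weak}})$ for $q \in (1, \infty)$. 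Uniform convexity of $L^q$ then converts weak plus uniform norm convergence into strong convergence in $C([0,T]; L^q)$: any failing sequence $t_\nu \to t^* \in [0,T]$ would give $\rho^\nu(t_\nu) \weakto \rho(t^*)$ with $\|\rho^\nu(t_\nu)\|_{L^q} \to \|\rho(t^*)\|_{L^q}$, hence strong, contradicting $\|\rho^\nu(t_\nu) - \rho(t_\nu)\|_{L^q} \geq \delta$ together with the continuity of $\rho$ valued in $L^q$. The endpoint $q = 1$ is handled by Brezis-Lieb after a.e. extraction from strong $L^2$ convergence, using the uniform-in-$t$ convergence of the $L^1$-norms. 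I expect the main technical obstacle to be the renormalization property of $\bar\rho$ in the degenerate case (H1'), where the commutator estimates of \cite{DPL} do not apply and must be replaced by the sharper harmonic-analytic arguments of \cite{BC}.
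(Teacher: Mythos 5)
Your proof is correct, but its central mechanism is genuinely different from the paper's. The paper obtains uniform-in-time convergence of the \emph{local} norms $\|\rho^\nu(t)\|_{L^q(B_R)}$ by writing the equation satisfied by $\beta(\rho^\nu)$, showing that the viscous defect $\nu\int|\nabla\rho^\nu|^2\beta''(\rho^\nu)\varphi$ must vanish in the limit because the limit $\rho$ is renormalized, and running Ascoli--Arzel\`a on $t\mapsto\int\beta(\rho^\nu)\varphi$; since this only controls norms on balls, a separate tightness estimate with the cut-offs $\psi_r^R$ (the paper's Step 5) is then needed to upgrade $C([0,T];L^q_{\mathrm{loc}})$ to $C([0,T];L^q(\R^d))$. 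You replace both of these steps by a single observation: $t\mapsto\|\rho^\nu(t)\|_{L^q}^q$ is nonincreasing, so it is squeezed between $\|\rho^\nu(T)\|_{L^q}^q$ and $\|\rho^\nu_0\|_{L^q}^q$, and both endpoints converge to the constant value $\|\rho(t)\|_{L^q}^q=\|\rho_0\|_{L^q}^q$ --- the upper one by \eqref{eq:convrho0}, the lower one by weak lower semicontinuity once $\rho^\nu(T)\weakto\rho(T)$ is known. This yields uniform convergence of the \emph{global} norms directly, so Radon--Riesz applies on all of $\R^d$ and no tightness argument is required. What each approach buys: yours is shorter and sidesteps the renormalization-defect analysis entirely, but it leans on two exact structural facts --- monotonicity of the global $L^q$ norm for the divergence-free advection-diffusion equation, and exact conservation of $\|\rho(t)\|_{L^q}$ for the renormalized limit --- whereas the paper's local argument is more robust and its cut-off computation is recycled in Step 1 of the proof of Theorem \ref{teo:strong_convergence_vort} to prove equi-integrability of $\omega^\nu$ in the case $p=1$. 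One presentational caveat: the lower endpoint of your norm sandwich needs $\rho^\nu(T)\weakto\rho(T)$, so the Ascoli--Arzel\`a/$C([0,T];L^q_w)$ step must logically come before the norm argument; as written the order is inverted, but there is no circularity since the weak-continuity argument uses only the uniform bounds and the equation.
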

\begin{proof}
We divide the proof in several steps.\\
\\
\underline{Step 1}\hspace{0.5cm}\emph{Strong convergence in $L^q((0,T)\times\R^2)$ , $1<q<\infty$.}\\
\\
Let $\rho^\nu$ be the unique solution of \eqref{eq:TEe}. Then, for all $1\leq q\leq\infty$ we have that 
\begin{equation}
\label{est:rhonu}
\|\rho^\nu(t)\|_{L^q}\leq \|\rho_0^\nu\|_{L^q},
\end{equation}
and from \eqref{eq:convrho0} we deduce that $\rho^\nu$ is equi-bounded in $L^\infty((0,T);L^{1}(\R^d)\cap L^{\infty}(\R^d))$.
Then, up to a subsequence, there exists $\bar{\rho}\in L^\infty((0,T);L^{1}(\R^d)\cap L^{\infty}(\R^d))$ such that for any $1< q<\infty$
\begin{equation}\label{weakconvrp}
\rho^\nu\weakto\bar{\rho} \hspace{0.5cm}\mbox{ in } L^q((0,T)\times\R^d).
\end{equation}
Because of the linearity of the equation, it is immediate to deduce that $\bar{\rho}$ is a solution of \eqref{eq:TE} and by uniqueness it must be $\bar{\rho}=\rho$. Moreover, since $\rho$ is a renormalized solution of \eqref{eq:TE} it holds that
$$
\int_{\R^d}|\rho(t,x)|^q\de x=\int_{\R^d}|\rho_0(x)|^q\de x.
$$
By the lower semi-continuity of the $L^q$-norms with respect to the weak convergence we have that
\begin{align*}
\|\rho\|_{L^q(L^q)} & \leq \liminf_{\nu\to 0}\|\rho^\nu\|_{L^q(L^q)}\leq \limsup_{\nu\to 0}\|\rho^\nu\|_{L^q(L^q)}\\
& \leq T^{\frac{1}{q}} \lim_{\nu\to 0}\|\rho_0^\nu\|_{L^q}= T^{\frac{1}{q}}\| \rho_0\|_{L^q}=\|\rho\|_{L^q(L^q)},
\end{align*}
which implies the convergence of $\|\rho^\nu\|_{L^q(L^q)}$ towards $\|\rho\|_{L^q(L^q)}$. This latter fact, together with the weak convergence in \eqref{weakconvrp}, implies that
\begin{equation}\label{strongrp}
\rho^\nu\to\rho\hspace{0.5cm}\mbox{ in }L^q((0,T)\times\R^d),
\end{equation}
for all $1< q<\infty$. 
\\
\\
\underline{Step 2} \hspace{0.5cm}\emph{Convergence in $C([0,T];L^q_w(\R^d))$, $1<q<\infty$.}\\
\\
 By using the equation, it is a well-known fact that a weak solution $\rho$ of \eqref{eq:TE}, with initial datum $\rho_0\in L^q(\R^d)$, lies in the space $C([0,T];L^q_w(\R^d))$. In particular, this means that for any $\varphi\in C^\infty_c(\R^d)$ the map
$$
f_{\varphi}:t\in [0,T]\mapsto \int_{\R^d}\rho(t,x)\varphi(x)\,\de x
$$
is continuous. For any $\varphi\in C^\infty_c(\R^d)$ define the sequence of functions $f^\nu_\varphi$ as
$$
f_\varphi^\nu:t\in [0,T]\mapsto\int_{\R^d}\rho^\nu(t,x)\varphi(x)\de x.
$$
First of all, we have that
\begin{equation}
\sup_{t\in [0,T]} |f^\nu_\varphi(t)| =\sup_{t\in [0,T]} \left|\int_{\R^d}\rho^\nu(t,x)\varphi(x)\de x \right|\leq C \|\rho_0\|_{L^q}\|\varphi\|_{L^{q'}}.
\end{equation}
Moreover, by using the equation we have that
$$
\dot{f}^\nu_\varphi(t) =\int_{\R^d}\rho^\nu(t,x)b^{\nu}(t,x)\cdot\nabla\varphi(x)\de x+\nu\int_{\R^d}\rho^\nu(t,x)\Delta\varphi(x)\de x,
$$
which is uniformly bounded in $[0,T]$ by using \eqref{eq:h2} and \eqref{est:rhonu}. By Step 1, it follows that
$$
\dot{f}^\nu_\varphi\to\dot{f}_\varphi\hspace{0.5cm}\mbox{ in }L^1((0,T)),
$$
which eventually implies that
$$
f^\nu_\varphi\to f_\varphi \hspace{0.5cm}\mbox{ uniformly in }[0,T].
$$
By using the density of $C^\infty_c(\R^d)$ in $L^{q'}(\R^d)$, the previous convergence is equivalent to saying that
$$
\rho^\nu\to\rho\hspace{0.5cm}\mbox{ in }C([0,T];L^q_w(\R^d)).
$$
\\
\underline{Step 3}\hspace{0.5cm}\emph{Convergence of the $L^q$-norms on bounded sets.}\\
\\
Let $\beta\in L^{\infty}(\R)\cap C^2(\R)$ and define the functions
$$
f_{\beta,\varphi}:t\in [0,T]\mapsto\int_{\R^d}\beta(\rho(t,x))\varphi(x)\de x,
$$
$$
f_{\beta,\varphi}^\nu:t\in [0,T]\mapsto\int_{\R^d}\beta(\rho^\nu(t,x))\varphi(x)\de x.
$$
If we compute the time derivative we get
\begin{equation}\label{eq:lem56}
\dot{f}_{\beta,\varphi}=\int_{\R^2}\beta(\rho(t,x))b(t,x)\cdot\nabla\varphi(x)\de x,
\end{equation}
\begin{equation}\label{eq:lem57}
\begin{aligned}
\dot{f}_{\beta,\varphi}^\nu&=\int_{\R^2}\beta(\rho^\nu(t,x))b^{\nu}(t,x)\cdot\nabla\varphi(x)\de x+\nu\int_{\R^d}\beta(\rho^\nu(t,x))\Delta\varphi(x)\de x \\
&-\nu\int_{\R^d}|\nabla\rho^\nu(t,x)|^2\beta''(\rho^\nu(t,x))\varphi(x)\de x.
\end{aligned}
\end{equation}
Since $\beta$ is a bounded function and $\rho^\nu$ converges a.e. to $\rho$, by dominated convergence we readily conclude that for any $k<\infty$
\begin{equation}\label{eq:convbeta}
\beta(\rho^\nu)\to \beta(\rho)\hspace{0.5cm}\mbox{ in }L^{k}_{\mathrm{loc}}((0,T)\times\R^d).
\end{equation}
We write the equation for $\beta(\rho^\nu)$
\begin{align}
&\int_0^T\int_{\R^d}\beta(\rho^\nu)\left(\partial_t\varphi+b^{\nu}\cdot\nabla\varphi\right)\de x\de t+\int_{\R^d}\beta(\rho_0^\nu)\varphi_{|_{t=0}}\de x \nonumber
\\&=\nu\int_0^T\int_{\R^d}\beta(\rho^\nu)\Delta\varphi \de x-\nu\int_0^T\int_{\R^d}|\nabla\rho^\nu|^2\beta''(\rho^\nu)\varphi\de x\de t,\label{eq:termine_rinormalizzazione}
\end{align}
and by letting $\nu\to 0$ and using \eqref{eq:convbeta} and that $\varphi$ has compact support, since we know that $\rho$ is a renormalized solution of \eqref{eq:TE}, the right hand side must vanish.\\
Then, looking at \eqref{eq:lem56} and \eqref{eq:lem57}, we get that $\dot{f}^\nu_{\beta,\varphi}$ converges in $L^1((0,T))$ to $\dot{f}_{\beta,\varphi}$ which eventually implies that
$$
\int_{\R^d}\beta(\rho^\nu(t,x))\varphi(x)\de x\to\int_{\R^d}\beta(\rho(t,x))\varphi(x)\de x\hspace{0.5cm}\mbox{ uniformly in }[0,T].
$$
By approximation we can take $\beta(s)=s^q$ and $\varphi=\chi_R$, the indicator of the ball of radius $R>0$, and finally we get that
$$
\|\rho^\nu(t)\|_{L^q(B_R)}\to\|\rho(t)\|_{L^q(B_R)} \hspace{0.5cm}\mbox{ uniformly in }[0,T].
$$
\\
\underline{Step 4}\hspace{0.5cm}\emph{Convergence in $C([0,T];L^q_{\mathrm{loc}}(\R^d))$, $1\leq q<\infty$.}\\
\\
By Step 2 we have that for any $t\in[0,T]$ and any $\{t_{\nu}\}_{\nu}\subset [0,T]$ such that $t_{\nu}\to t$ 
\begin{equation}\label{eq:cc1}
\int_{\R^2}\rho^\nu(t_\nu,x)\varphi(x)\de x\to \int_{\R^2}\rho(t,x)\varphi(x)\de x,
\end{equation}
while by Step 3 we get that
\begin{equation}\label{eq:cc2}
\int_{B_R}|\rho^\nu(t_\nu,x)|^q\de x\to \int_{B_R}|\rho(t,x)|^q\de x,\quad \mbox{ for any } R>0.
\end{equation}
From \eqref{eq:cc1} and \eqref{eq:cc2} we easily infer that for $1<q<\infty$
\begin{equation}\label{eq:convlocal}
\rho^\nu\to\rho\quad \mbox{ in } C([0,T];L^q_{\mathrm{loc}}(\R^d)).
\end{equation}
Since the convergence is local in space we deduce that \eqref{eq:convlocal} holds also in the case $q=1$.\\
\\
\underline{Step 5}\hspace{0.5cm}\emph{Convergence in $C([0,T];L^q(\R^d))$, $1\leq q<\infty$.}\\
\\
Let $r>0$, then
\begin{equation}
\|\rho^\nu(t,\cdot)-\rho(t,\cdot)\|_{L^q}^q\leq \int_{B_r}|\rho^\nu(t,x)-\rho(t,x)|^q\de x+\int_{B^c_r}|\rho^\nu(t,x)|^q\de x+\int_{B^c_r}|\rho(t,x)|^q\de x.
\end{equation}
By the previous step we know that the first term on the right hand side converges to $0$ as $\nu\to 0$ for any fixed $r>0$. The remaining two terms can be made arbitrary small independently from $\nu$ if we prove that for $1\leq q<\infty$ it holds that for any $\eta>0$ there exists $r>0$, independent from $\nu$ such that
\begin{equation}\label{eq:tightness}
\sup_{t\in(0,T)}\left(\int_{B_r^c}|\rho^\nu(t,x)|^q\de x+\int_{B_r^c}|\rho(t,x)|^q\de x\right)<\eta.
\end{equation}
The following argument holds for $\nu\geq 0$. Let $r,R>0$ such that $2r<R$ and let us consider a positive test function $\psi_r^R$ for which
\begin{equation}
\psi_r^R(x)=
\begin{cases}
0 \hspace{1cm}\mbox{if } 0<|x|<r,\\
1 \hspace{1cm}\mbox{if } 2r<|x|<R,\\
0 \hspace{1cm}\mbox{if } |x|>2R,
\end{cases}
\end{equation}
such that $0\leq \psi_r^R\leq 1$ and 
\begin{equation}\label{boundpsi}
|\nabla \psi_r^R|\leq \frac{C}{r}, \hspace{0.5cm}|\nabla^2 \psi_r^R|\leq \frac{C}{r^2}.
\end{equation}
Let $t\in(0,T)$ and $\beta(s)=s^q$. Multiply the equation \eqref{eq:TEe} by $\beta'(|\rho^\nu|)\psi_r^R$ and integrate in space and in time. We get that
\begin{align*}
\int_{\R^d}\beta(|\rho^\nu(t)|)\psi_r^R \de x & \leq \int_{\R^d}\beta(|\rho_0^\nu|)\psi_r^R\de x + \int_0^t\int_{\R^d}\beta(|\rho^\nu|)|b^{\nu}||\nabla \psi_r^R|\de x \de t\\
&+\nu \int_0^t\int_{\R^d}\beta(|\rho^\nu|)|\Delta \psi_r^R|\de x \de t.
\end{align*}
By using \eqref{est:rhonu} and \eqref{eq:h2} in the case $\nu>0$, the analogous bounds for $\rho$ and $b$ in the case $\nu=0$, and \eqref{boundpsi}, after sending $R\to\infty$ we obtain that
\begin{align*}
\int_{B_r^c}|\rho^\nu(t,x)|^q\de x &\leq \int_{B_r^c}|\rho_0^\nu(x)|^q\de x +\frac{C}{r}\|\rho^\nu\|_{L^\infty(L^\infty)}\int_0^T\int_{\R^d}|b^{\nu}_1(t,x)|\de x \de t \\
&+\frac{C}{r} \|b^{\nu}_2(t,\cdot)\|_{L^\infty(L^\infty)}\int_0^T\int_{\R^d}|\rho^\nu(t,x)|^q\de x \de t \\
&+\frac{C\nu}{r^2} \int_0^T\int_{\R^d}|\rho^\nu(t,x)|^q\de x \de t\\
&\leq \int_{B_r^c}|\rho_0^\nu(x)|^q\de x +\frac{C}{r}+\frac{C}{r^2},
\end{align*}
where the constant $C$ is independent on $\nu$ and $t$. Next, note that by \eqref{eq:convrho0}, we have that $\rho^{\nu}_0\to\rho_0$ strongly in $L^{q}(\R^d)$ and therefore given $\eta>0$ there exists $r>0$ such that 
\begin{equation*}
\int_{B_r^c}|\rho_0^\nu(x)|^q\de x\leq \frac{\eta}{2}, 
\end{equation*}
and the same holds for $\rho_0$. Finally, choosing $r$ such that also $\frac{C}{r}+\frac{C}{r^2}\leq \frac{\eta}{2}$ we deduce \eqref{eq:tightness}. 
\end{proof}

\subsection{Proof of Theorem \ref{teo:strong_convergence_vort}}

\begin{proof}
We divide the proof in several steps.\\
\\
\underline{Step 1}\hspace{0.5cm}{\em Weak convergence of the vorticity.}\\
\\
As in  Step 1 of Theorem \ref{teo:main2} we have that 
\begin{equation}\label{eq:weakconvomr2}
\omv\weaktos\om\quad\mbox{ weakly* in }L^{\infty}((0,T);L^{p}(\R^2)). 
\end{equation}
Indeed, the same proof holds also in the case $p=1$ provided we show that $\{\omv\}_{\nu}$ is equi-integrable in $L^{1}((0,T)\times\R^2)$. To prove this we start by noticing that, since $ \omv_{0}\rightarrow \omega_{0}\textrm{ in }L^{1}(\R^2)$, for any $\e>0$ there exist $C_\e$, 
$\omvepi$, and $\omveii$  such that 	
\begin{equation}\label{eq:3}
\begin{aligned}
&\omv_{0}=\omvepi+\omveii,
&\|\omvepi\|_{1}\leq \e,\hspace{0.4cm}\mbox{ and }\hspace{0.4cm}\|\omveii\|_{\infty}\leq C_\e.
\end{aligned}
\end{equation}
We also have that both $\omvepi$ and $\omveii$ are in $L^{1}(\R^2)\cap L^{\infty}(\R^{2})$ with bounds  depending on $\nu$ and $\e$.\\
Let us consider the unique weak solution $\omvep\in L^{\infty}((0,T);L^{1}(\R^2)\cap L^{\infty}(\R^{2}))$ of the linear problem 
\begin{equation}\label{eq:5}
\begin{cases}
\partial_t \omvep-\nu\Delta \omvep+\uv\cdot\nabla \omvep=0,\\
\omvep(0,x)=\omvepi.
\end{cases}
\end{equation}
By standard $L^p$-estimates we have that 
\begin{equation}\label{eq:7}
\|\omvep(t)\|_{1}\leq \|\omvepi\|_{1}\leq \e.
\end{equation}
\\
Next, we consider the unique weak solution $\omvei\in  L^{\infty}((0,T);L^{1}(\R^2)\cap L^{\infty}(\R^{2}))$ of the linear problem 
\begin{equation}\label{eq:8}
\begin{cases}
\partial_t \omvei-\nu\Delta \omvei+\uv\cdot\nabla \omvei=0,\\
\omvei(0,x)=\omveii.
\end{cases}
\end{equation}
By the maximum principle we have that  
\begin{equation}\label{eq:10}
\|\omvei(t)\|_{\infty}\leq \|\omveii\|_{\infty}\leq C_{\e},
\end{equation}
where $C_{\e}$ is the same constant as in \eqref{eq:3}. Moreover, for $C$ independent on $\nu$ and $\e$ we also have \begin{equation}\label{eq:11}
\|\omvei(t)\|_{1}\leq \|\omveii\|_{1}\leq C,
\end{equation}
where the last inequality in \eqref{eq:11} follows from \eqref{eq:3}. 
Next, we want to prove that $\omvei$ is small at infinity. 
Let $r$ and $R$ be such that $\tilde{R}<r<R /2$ and let $\psi_{r}^{R}\in C^{\infty}_{c}(\R^{2})$ be the cut-off function defined in Lemma \ref{teo:strong_convergence_TE}. Then, since $\omvei$ satisfies
\begin{equation*}
\partial_t|\omvei|+\uv\cdot\nabla|\omvei|-\nu\Delta|\omvei|\leq 0, 
\end{equation*}
and $\psi_{r}^{R}$ is positive we can easily deduce that 
\begin{equation}\label{eq:equi9}
\int|\omvei|\psi_{r}^{R}\,\de x\leq \iint |\uv||\omvei||\nabla\psi_{r}^{R}|\,\de x\de t+\nu\iint |\omvei||\Delta\psi_{r}^{R}|\,\de x\de t,
\end{equation}
and after sending $R\to\infty$  we have 
\begin{equation}\label{eq:equi10}
\begin{aligned}
\int_{B_{2r}^c}|\omvei|\de x & \leq\frac{1}{r}\iint|\uv||\omvei|\,\de x\de t   +\frac{\nu}{r^2}\iint|\omvei|\,\de x\de t. 
\end{aligned}
\end{equation}
Let us now decompose the Biot-Savart kernel $K= K_1+K_2$, where $K_{1}=K\chi_{B_{1}(0)}\in L^{1}(\R^2)$ and $K_{2}=K\chi_{B_{1}(0)^{c}}\in L^{\infty}(\R^2)$. The decomposition of the kernel induces the decomposition $\uv=\uv_{1}+\uv_{2}$  and, by Young's inequality (for the convolution), we have  that $\{\uv_1\}_{\nu}$ is bounded in $ L^{\infty}((0,T);L^{1}(\R^2))$ and $\{\uv_2\}_{\nu}$ is bounded in $L^{\infty}((0,T)\times\R^2)$ and therefore from \eqref{eq:equi10} for some $C$ independent from $\nu$ and $\e$ we get that for a.e. $t\in (0,T)$
\begin{equation}\label{eq:equi11}
\begin{aligned}
\int_{B_{2r}^c}|\omvei|\,\de x & \leq\frac{C(C_{\e}+1)}{r}   +\frac{C}{r^2},
\end{aligned}
\end{equation}
which implies the existence of $r_{\e}$ such that for a.e. $t\in (0,T)$
\begin{equation}\label{eq:211}
\int_{B_{r_{\e}}^c}|\omvei|\,\de x\leq \e.
\end{equation}
Next, we notice that for fixed $\nu$ we have that 
$\omv\in L^{\infty}((0,T);L^{1}(\R^2)\cap L^{\infty}(\R^2))$, and $\omv$ solves 
\begin{equation}\label{eq:w}
\begin{cases}
\partial_t\omv+\uv\cdot\nabla\omv = \nu\Delta\omv ,\\
\omv|_{t=0}=\omv_0.       
\end{cases}
\end{equation}
Then, fix $\e>0$ and define $\hat{\omega}_{\nu,\e}:=\omvep+\omvei$. Then, $\hat{\omega}_{\nu,\e}\in L^{\infty}((0,T);L^{1}(\R^2) \cap L^{\infty}(\R^2))$, and $\hat{\omega}_{\nu,\e}$ solves 
\begin{equation}\label{eq:w1}
\begin{cases}
\partial_t\hat{\omega}_{\nu,\e}+\uv\cdot\nabla\hat{\omega}_{\nu,\e} = \nu\Delta\hat{\omega}_{\nu,\e} ,\\
\hat{\omega}_{\nu,\e}|_{t=0}=\omv_0.       
\end{cases}
\end{equation}
Then the uniqueness of the linear problem implies that $$\omv=\hat{\omega}_{\nu,\e}=\omvep+\omvei.$$
In conclusion we have proved that for any $\e>0$ there exist $C_\e$, $r_\e$, $\omvep$ and $\omvei$ such that for a.e. $t\in(0,T)$
\begin{equation*}
\begin{aligned}
&\omv=\omvep+\omvei,
&\|\omvep(t)\|_{1}\leq \e,\\
&\|\omvei(t)\|_{\infty}\leq C_{\e},
&\int_{B^{c}_{r_{\e}}}\omvei(t,x)\,\de x\leq \e. 
\end{aligned}
\end{equation*}
By integrating in time, since $T$ is assumed finite, we easily get that $\{\omv\}_{\nu}$ is equi-integrable in $L^{1}((0,T)\times\R^{2})$.\\
\\
\underline{Step 2}\hspace{0.5cm}{\em Strong convergence of the velocity.}\\
\\
We first recall that for any $p\geq 1$, the kernel $K:L^{p}(\R^2)\to L^q_{\mathrm{loc}}(\R^2)$ is a compact operator, when $q$ is such that
\begin{equation}\label{qp}
1+\frac{1}{q}-\frac{1}{p}> \frac{1}{2}.
\end{equation}
\\
Moreover, it is a classical fact (see \cite{DPM})  that, for some $s,m>0$, we also have that 
$$
\{u^\nu\}\mbox{ is bounded in } \mathrm{Lip}([0,T];W^{-s,m}_{\mathrm{loc}}(\R^2)).
$$
Then, we easily deduce that for $p>1$ we can upgrade the convergence \eqref{eq:conv} to
$$
u^\nu\to u \hspace{0.5cm} \mbox{in } L^2((0,T);L^2_{\mathrm{loc}}(\R^2)),
$$
while for $p=1$ we have
$$
u^\nu\to u \hspace{0.5cm} \mbox{in } L^q((0,T);L^q_{\mathrm{loc}}(\R^2)),
$$
for any $1\leq q<2$.
\\
\\
\underline{Step 3}\hspace{0.5cm}{\em Strong convergence of the vorticity.}\\
\\
The proof is based on an $\e$-third argument as in \cite{CDE}. Let $\psi_n$ be a standard mollifier on $\R^2$, we introduce the following linear problems
\begin{equation}\label{eq:lin1}
\begin{cases}
\partial_t\omega^\nu_n+u^\nu\cdot\nabla\omega^\nu_n=\nu\Delta\omega^\nu_n,\\
\omega^\nu_n(0,\cdot)=\omega_0^\nu*\psi_n,
\end{cases}
\end{equation}
and
\begin{equation}\label{eq:lin2}
\begin{cases}
\partial_t\omega_n+u\cdot\nabla\omega_n=0,\\
\omega_n(0,\cdot)=\omega_0*\psi_n.
\end{cases}
\end{equation}
Note that the Cauchy problems \eqref{eq:lin1}, \eqref{eq:lin2} are linear since the vector fields $u^\nu$ and $u$ are fixed and do not depend on the solution itself contrary to what happens for the Euler and the Navier-Stokes equations. Moreover, there exists a unique smooth solution $\omega^\nu_n$ of $\eqref{eq:lin1}$ because $u^\nu$ is smooth, and a unique solution $\omega_n\in L^\infty((0,T);L^1(\R^2)\cap L^\infty(\R^2))$ as a consequence of the uniqueness theorems in \cite{DPL} for $p>1$ and \cite{CNSS} for $p=1$.\\

By triangular inequality we have that
\begin{equation}\label{eq:splittingeu}
\begin{aligned}
\sup_{t\in[0,T]}\|\omega^\nu(t)-\omega(t)\|_{L^p}\leq & \underbrace{\sup_{t\in[0,T]}\|\omega^\nu(t)-\omega^\nu_n(t)\|_{L^p}}_{(I)}
\\ &+\underbrace{\sup_{t\in[0,T]}\|\omega^\nu_n(t)-\omega_n(t)\|_{L^p}}_{(II)}
\\ &+\underbrace{\sup_{t\in[0,T]}\|\omega_n(t)-\omega(t)\|_{L^p}}_{(III)}.
\end{aligned}
\end{equation}
We estimate separately the three terms on the right hand side of \eqref{eq:splittingeu}.
Regarding $(I)$, we notice that the difference $\omega^\nu-\omega^\nu_n$ satisfies the equation
\begin{equation}
\partial_t\left(\omega^\nu-\omega^\nu_n\right)+u^\nu\cdot\nabla\left(\omega^\nu-\omega^\nu_n\right)=\nu\Delta\left(\omega^\nu-\omega^\nu_n\right).
\end{equation}
Therefore, we easily get for any $t\in(0,T)$ that 
\begin{align*}
\|\omega^\nu(t)-\omega^\nu_n(t)\|_{L^p}&\leq \|\omega^\nu_0-\omega^\nu_{0,n}\|_{L^p}
\end{align*}
which is small for $n$ large enough independently from $\nu$.\\
\\
Next, we consider $(III)$: since $\omega$ is a renormalized solution, due to the uniqueness of the linear problem (see \cite{DPL} and \cite{CNSS}) $\om$ is also Lagrangian and therefore 
$$
\omega(t,x)=\omega_0(X_{t,0}(x)),
$$
where $X$ is the unique regular Lagrangian flow of $u$. Moreover, the unique solution $\omega_n$ of \eqref{eq:lin2} is also renormalized and then Lagrangian and therefore is given by
$$
\omega_n(t,x)=\omega_{0,n}(X_{t,0}(x)).
$$
By using that $X$ is measure-preserving
\begin{align*}
\sup_{t\in[0,T]}\|\omega_n(t,\cdot)-\omega(t,\cdot)\|_{L^p}^p=&\sup_{t\in[0,T]}\int_{\R^2}\left| \omega_{0,n}(X_{t,0}(x))- \omega_0(X_{t,0}(x))\right|^p\de x\\ =&\int_{\R^2}\left| \omega_{0,n}(y)-\omega_0(y)\right|^p\de y=\|\omega_{0,n}-\omega_0\|_{L^p}^p,
\end{align*}
which goes to $0$ as $n\to\infty$.\\
\\
Finally, we consider the term $(II)$ and we note that for fixed $n$ the sequence of solutions $\{\omega_{n}^{\nu}\}_{\nu}$, the sequence of velocity fields $\{\uv\}_{\nu}$, the limit solution $\omega_n$, and the limit vector field $u$ satisfy the hypothesis of Lemma \ref{teo:strong_convergence_TE}. Therefore, for fixed $n$ the term $(II)$ goes to zero as $\nu\to0$ and the proof is concluded. 
\end{proof}

\section{Conservation of the energy}
In this last section we prove that solutions of the $2D$ Euler equations obtained in the vanishing viscosity limit conserve the energy. In particular, we extend the result in \cite{CFLS} to the case when the Euler equations are considered on the whole space $\R^2$. The strategy we adopt is similar to the one we used in \cite{CCS3} and it is combined with the results of \cite{CFLS}. We start by introducing some additional notation. We denote with $\star$ the following variant of the convolution:
\begin{align}
v\star w&=\sum_{i=1}^2 v_i*w_i \hspace{1cm}\mbox{if }v,w\mbox{ are vector fields in }\R^2,\label{4.1}\\
A\star B&=\sum_{i,j=1}^2A_{ij}*B_{ij} \hspace{0.5cm}\mbox{if }A,B\mbox{ are matrix-valued functions in }\R^2. \label{4.2}
\end{align}
With the notations above it is easy to check that, if $f:\R^2\to\R$ is a scalar function and $v:\R^2\to\R^2$ is a vector field, then
$$
f*\curl v=\nabla^\perp f\star v,
$$
$$
\nabla^\perp f\star \dive(v\otimes v)=\nabla\nabla^\perp f\star(v\otimes v),
$$
$$
v_i*\Delta f= \Delta v_i*f.
$$
A peculiar fact of the two-dimensional Euler equations is that the velocity field is in general not globally square integrable: this is due to the fact that the Biot-Savart kernel fails to be square integrable at infinity.  To have a well-defined kinetic energy we need to require that the vorticity has zero mean value. In fact, the following proposition holds true, see \cite{MB}.
\begin{prop}\label{prop:dpm}
An incompressible velocity field in $\R^2$ with vorticity of compact support has finite kinetic energy if and only if the vorticity has zero mean value, that is
\begin{equation}\label{zeromean}
\int_{\R^2}|u(t,x)|^2\de x<\infty\iff\int_{\R^2}\omega(t,x)\de x=0.
\end{equation}
\end{prop}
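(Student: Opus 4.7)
The plan is to use the Biot--Savart formula $u=K\ast\omega$ (with the implicit convention that the velocity is the canonical one associated to $\omega$; see the last paragraph) and to extract the precise leading-order decay of $u(x)$ as $|x|\to\infty$. The key observation is that the Biot--Savart kernel $|K(x)|=\frac{1}{2\pi|x|}$ is borderline non square-integrable at infinity in $\R^{2}$: the decay $|x|^{-1}$ fails to be in $L^{2}$ at infinity, while $|x|^{-2}$ is square-integrable on $\{|x|>R\}$. Since the leading-order coefficient of $u$ at infinity turns out to be exactly $\int\omega$, the vanishing or not of this integral decides the finiteness of the energy.

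Concretely, assume $\mathrm{supp}\,\omega\subset B_{R}(0)$ and fix $|x|>2R$. I would Taylor-expand the kernel around $y=0$: for $y\in B_{R}(0)$,
\begin{equation*}
\frac{(x-y)^{\perp}}{|x-y|^{2}}=\frac{x^{\perp}}{|x|^{2}}+O\!\left(\frac{|y|}{|x|^{2}}\right),
\end{equation*}
uniformly in $y$. Integrating against $\omega$ yields
\begin{equation*}
u(x)=\frac{x^{\perp}}{2\pi|x|^{2}}\int_{\R^{2}}\omega(y)\,\de y+r(x),\qquad |r(x)|\leq \frac{CR\|\omega\|_{L^{1}}}{|x|^{2}}.
\end{equation*}
If $\int\omega=0$, the principal term drops out and $|u(x)|^{2}\lesssim|x|^{-4}$ on $\{|x|>2R\}$, which is integrable at infinity; combining with the local $L^{2}$ bound on $B_{2R}$ obtained via the decomposition $K=K\chi_{B_{1}(0)}+K\chi_{B_{1}(0)^{c}}$ and Young's inequality (as already used in Step~1 of the proof of Theorem~\ref{teo:strong_convergence_vort}), one deduces $u\in L^{2}(\R^{2})$. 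Conversely, if $m:=\int\omega\neq 0$, the expansion gives $|u(x)|\geq \frac{|m|}{4\pi|x|}$ for $|x|$ sufficiently large, and since $\int_{|x|>R_{0}}|x|^{-2}\,\de x=+\infty$ we conclude $u\notin L^{2}(\R^{2})$.

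The main point that deserves care, and which I would flag as the only real obstacle, is the non-uniqueness of the velocity associated to $\omega$: any divergence-free vector field $\tilde u$ with $\curl\tilde u=\omega$ is of the form $\tilde u=K\ast\omega+h$ with $h$ harmonic and divergence-free, and adding, for instance, a nonzero constant vector would destroy $L^{2}$-integrability regardless of the mean of $\omega$. The proposition is to be interpreted with the standard convention that $u$ is the Biot--Savart reconstruction of $\omega$, or equivalently that $u$ vanishes at infinity, which excludes such harmonic additions. Under this convention the argument above is complete, the remainder being a routine verification of the decay estimates.
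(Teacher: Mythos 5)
The paper does not prove this proposition; it simply cites Majda--Bertozzi \cite{MB}, and your argument is precisely the standard one given there: the far-field expansion $u(x)=\frac{m}{2\pi}\frac{x^{\perp}}{|x|^{2}}+O(|x|^{-2})$ with $m=\int\omega$, so that the borderline non-square-integrable tail $|x|^{-1}$ survives exactly when $m\neq 0$. Your Taylor-remainder bound is correct (for $|x|>2R$ and $|y|\leq R$ the segment from $x$ to $x-y$ stays in $\{|z|\geq|x|/2\}$ where $|\nabla K|\lesssim|x|^{-2}$), and the caveat about fixing $u=K\ast\omega$ (equivalently, $u$ vanishing at infinity) to rule out harmonic additions is a legitimate and necessary reading of the statement. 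The only point worth noting is that the local square-integrability of $K\ast\omega$ via the splitting $K=K\chi_{B_1}+K\chi_{B_1^{c}}$ and Young's inequality requires $\omega\in L^{p}$ for some $p>1$ (for $p=1$ one only gets $L^{q}_{\mathrm{loc}}$ with $q<2$), which is consistent with how the proposition is used in Theorem \ref{teo:energy} but should be stated as a hypothesis if you want the claim in this generality.
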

The main result of this section is the following. We stress that the proof below does not hold in the case $p=1$ since the convergence \eqref{conv:glob} fails in this case, as already pointed out in Step 2 in the proof of Theorem \ref{teo:strong_convergence_vort}.
\begin{thm}\label{teo:energy}
Let $p\in(1,\infty)$ and $\omega_0\in L^p_c(\R^2)$ verifying \eqref{zeromean}. Let $u^\nu, u$ as in Theorem \ref{teo:strong_convergence_vort}. Then, $u^\nu$ satisfies the following convergence
\begin{equation}\label{conv:glob}
u^\nu\to u \hspace{1cm}\mbox{in }C([0,T];L^2(\R^2)),
\end{equation}
and $u$ conserves the energy, that is
\begin{equation}\label{eq:consen}
\|u(t)\|_{L^2}=\|u_0\|_{L^2}, \hspace{0.5cm}\forall t\in[0,T].
\end{equation}
\end{thm}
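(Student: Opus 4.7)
The plan is to split the argument into two stages: first, upgrade the local-in-space strong convergence $\uv \to u$ in $L^2((0,T); L^2_{\mathrm{loc}}(\R^2))$ (already furnished by Step 2 in the proof of Theorem \ref{teo:strong_convergence_vort}) to the global-in-space convergence \eqref{conv:glob}; second, pass to the limit in the Navier--Stokes energy identity using \eqref{conv:glob} and adapt the scheme of \cite{CFLS} to obtain \eqref{eq:consen}.

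For the first stage I would exploit a Serfati-type identity. Fix a smooth cutoff $a \in C^\infty_c(\R^2)$ with $a \equiv 1$ near the origin, so that $K = aK + (1-a)K$, and write componentwise (using the notation $\star$ introduced in \eqref{4.1}--\eqref{4.2})
\begin{equation*}
\uv_i(t) - \uv_{0,i} = (aK_i)*(\omv(t) - \omv_0) - \int_0^t \nabla\nabla^\perp[(1-a)K_i] \star (\uv \otimes \uv)(s) \, \de s + \nu \int_0^t \Delta[(1-a)K_i] * \uv_i(s) \, \de s,
\end{equation*}
and the analogous identity with $\nu=0$ for $u$. Subtracting, the first term is controlled in $L^\infty_t L^2_x$ by Young's inequality, since $aK_i \in L^1\cap L^q$ for some $q$ and $\omv \to \om$ in $C([0,T];L^p(\R^2))$ by Theorem \ref{teo:strong_convergence_vort}; the viscous contribution vanishes in the inviscid limit because $\Delta[(1-a)K_i]$ is a smooth integrable kernel and $\uv$ is equi-bounded in $L^\infty(L^2_{\mathrm{loc}})$; the bilinear term is handled using the far-field decay $\nabla\nabla^\perp[(1-a)K_i] \sim |x|^{-3}$, which makes the associated kernel integrable, together with the local $L^2$-convergence of the velocities from Step 2 of Theorem \ref{teo:strong_convergence_vort}, and is finally closed via a Gr\"onwall bootstrap in $t$.

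Once \eqref{conv:glob} is available, the Navier--Stokes energy identity
\begin{equation*}
\|\uv(t)\|_{L^2}^2 + 2\nu \int_0^t \|\nabla \uv(s)\|_{L^2}^2 \, \de s = \|\uv_0\|_{L^2}^2
\end{equation*}
allows me to pass to the limit. The right-hand side converges to $\|u_0\|_{L^2}^2$ thanks to $\omega_0^\nu \to \omega_0$ in $L^p$, the compact support of the initial data, and the zero-mean condition \eqref{zeromean}, while the first term on the left converges to $\|u(t)\|_{L^2}^2$ by \eqref{conv:glob}. Hence $2\nu \int_0^t \|\nabla \uv\|_{L^2}^2 \, \de s$ has a limit as $\nu \to 0$, equal to $\|u_0\|_{L^2}^2 - \|u(t)\|_{L^2}^2$. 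The combination of Theorem \ref{teo:strong_convergence_vort} with the Lagrangian character of $\om$ (Remark \ref{rem:equi}, giving $\|\om(t)\|_{L^p}=\|\om_0\|_{L^p}$) supplies the precise inputs needed to run the argument of \cite{CFLS}, localized with cutoffs whose contribution is absorbed by the global decay provided by \eqref{conv:glob}, which forces this limit to vanish, whence \eqref{eq:consen}.

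The main obstacle is the first stage. Because the Biot--Savart kernel is not square integrable on $\R^2$, neither Aubin--Lions nor direct elliptic regularity of $\uv$ produces strong $L^2(\R^2)$-convergence of the velocities, as was available in the periodic setting of \cite{CFLS}. The Serfati identity bypasses this obstruction by splitting $K$ into a compactly supported piece (controlled by the now-available strong vorticity convergence) and a far-field piece whose second derivatives yield an integrable kernel well suited to the quadratic nonlinearity. Propagating sufficient uniform-in-$\nu$ spatial decay of $\uv$ along the evolution, following the scheme of \cite{CCS3}, is the technical core of the proof.
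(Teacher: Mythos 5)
Your strategy coincides with the paper's: a Serfati-type identity, obtained by splitting the Biot--Savart kernel with a cutoff $a$, upgrades the local convergence of the velocities to the global convergence \eqref{conv:glob}, and energy conservation then follows from the vanishing of the dissipation via the scheme of \cite{CFLS}. Three details in your sketch would, however, need repair before the argument closes. First, the viscous term in the identity is $[(1-a)K_i]*(\nu\Delta\omv)=\nu\,\Delta[(1-a)K_i]*\omv$, i.e.\ the Laplacian of the truncated kernel convolved with the \emph{vorticity}, not with $\uv_i$; it must be estimated by Young's inequality against the uniform $L^\infty_t L^p_x$ bound on $\omv$ (with $\Delta[(1-a)K_i]$ in the dual Lebesgue space), since equi-boundedness of $\uv$ in $L^\infty(L^2_{\mathrm{loc}})$ gives no global $L^2(\R^2)$ control of a convolution. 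Second, for the bilinear term the relevant input is not the local $L^2$ convergence of the velocities but the \emph{global} a priori bound $\|\uv(s)\|_{L^2}\leq \|u_0^\nu\|_{L^2}\leq C$ (available thanks to \eqref{zeromean} and the energy inequality), which gives
\begin{equation*}
\bigl\|\nabla\nabla^\perp[(1-a)K]\star(\uv\otimes\uv-u^{\nu'}\otimes u^{\nu'})\bigr\|_{L^2}\leq C\|u_0\|_{L^2}\,\|\uv-u^{\nu'}\|_{L^2},
\end{equation*}
so that Gr\"onwall closes on the global $L^2$ difference; local convergence plays no role here. Third, subtracting the ``$\nu=0$'' identity presupposes that the weak Euler solution $u$ itself satisfies the Serfati formula, which is not immediate at $L^p$ vorticity regularity; the paper sidesteps this by showing instead that $\{u^{\nu_n}\}_n$ is a Cauchy sequence in $C([0,T];L^2(\R^2))$ and identifying the limit with $u$ through the already established local convergence.

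On the conservation step, note that the only new case is $1<p<3/2$ (for larger $p$ the result of \cite{CFLS} applies directly). There the paper does not merely invoke \cite{CFLS} with cutoffs: it runs the enstrophy balance $\frac{\de}{\de t}\|\omv(t)\|_{L^2}^2=-2\nu\|\nabla\omv(t)\|_{L^2}^2$ together with Gagliardo--Nirenberg to derive an explicit decay of $\|\omv(t)\|_{L^2}$, and concludes that $\nu\int_0^t\|\omv(s)\|_{L^2}^2\de s\to 0$ because $\|\omega_0^\nu\|_{L^2}\to\infty$ when $\omega_0\notin L^2$. Once \eqref{conv:glob} is in hand no further localization is needed, so the ``cutoffs absorbed by global decay'' you allude to are superfluous; you should instead make the quantitative enstrophy estimate explicit, as that is where the hypothesis $p>1$ enters.
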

\begin{proof}
We recall that the parameter $\nu$ is always supposed to vary over a countable set, therefore given the sequence $\nu_n\to 0$, we denote with $u^n$ and $\omega^n$ the sequences $u^{\nu_n}$ and $\omega^{\nu_n}$. We divide the proof in several steps.\\
\\
\underline{Step 1}\hspace{0.5cm}\emph{A Serfati identity with fixed vorticity.}\\
\\
In this step we derive a formula for the approximate velocity $u^n$.\\
Let $a\in C^\infty_c(\R^2)$ be a smooth function such that $a(x)=1$ if $|x|<1$ and $a(x)=0$ for $|x|>2$. Differentiating in time the Biot-Savart formula we obtain that for $i=1,2$
\begin{align}
\partial_s u^n_i(s,x)&=K_i*(\partial_s \omega^n)(s,x) \nonumber\\
&=(aK_i)*(\partial_s \omega^n)(s,x)+[(1-a)K_i]*(\partial_s \omega^n)(s,x).\label{proof:convglob1}
\end{align}
Now we use the equation \eqref{eq:vort_NS} for $\omega^n$ obtaining
$$
\partial_s \omega^n=-u^n\cdot\nabla \omega^n+\nu_n\Delta \omega^n,
$$
and substituting in \eqref{proof:convglob1} we obtain
\begin{equation}
\partial_s u^n_i=(aK_i)*(\partial_s \omega^n)-[(1-a)K_i]*(v^n\cdot\nabla \omega^n)+[(1-a)K_i]*\left(\nu_n\Delta \omega^n\right).
\end{equation}
By the identity
$$
u^n\cdot\nabla \omega^n=\curl(u^n\cdot\nabla u^n)=\curl\dive(u^n\otimes u^n)
$$
we obtain that
\begin{equation}
[(1-a)K_i]*(u^n\cdot\nabla \omega^n)=\left(\nabla\nabla^\perp[(1-a)K_i]\right)\star(u^n\otimes u^n),\label{proof:convglob2}
\end{equation}
while by the properties of the convolution
\begin{equation}
[(1-a)K_i]*\left(\nu_n\Delta \omega^n\right)=\left(\Delta[(1-a)K_i]\right)*\left( \nu_n\omega^n\right), \label{proof:convglob3}
\end{equation}
where the notation $\star$ was introduced in \eqref{4.1} and \eqref{4.2}.
Substituting the expressions \eqref{proof:convglob2} and \eqref{proof:convglob3} in \eqref{proof:convglob1} and integrating in time we have that $u^n$ satisfies the following formula:
\begin{equation}\label{eq:serfativv}
\begin{split}
u^n_i(t,x)&=u^n_i(0,x)+(aK_i)*\left(\omega^n(t,\cdot)-\omega^n(0,\cdot)\right)(x)\\
&-\int_0^t\left(\nabla\nabla^\perp[(1-a)K_i]\right)\star(u^n(s,\cdot)\otimes u^n(s,\cdot))(x)\de s\\ &+\int_0^t\left(\Delta[(1-a)K_i]\right)*\left( \nu_n\omega^n\right)\de s.
\end{split}
\end{equation}
\\
\underline{Step 2}\hspace{0.5cm}\emph{$u^n$ is a Cauchy sequence in $C([0,T];L^2(\R^2))$.}\\
\\
Using formula \eqref{eq:serfativv} we can prove that $u^n$ is a Cauchy sequence. We consider $u^n,u^m$ with $n,m \in\N$. By linearity of the convolution we have that $u^n-u^m$ satisfies the following
\begin{equation}\label{diff}
\begin{split}
u^n_i(t,x)&-u^m_i(t,x)=\underbrace{u^n_i(0,x)-u^m_i(0,x)}_{(I)}\\ &
+\underbrace{(aK_i)*(\omega^n(t,\cdot)-\omega^m(t,\cdot))(x)}_{(II)}+\underbrace{(aK_i)*(\omega^m_0-\omega^n_0)(x)}_{(III)}\\ 
&-\int_0^t\underbrace{\left(\nabla\nabla^\perp[(1-a)K_i]\right)\star\left[u^n(s,\cdot)\otimes u^n(s,\cdot)-u^m(s,\cdot)\otimes u^m(s,\cdot)\right](x)}_{(IV)}\de s\\ &+\int_0^t\underbrace{\left(\Delta[(1-a)K_i]\right)*\left( \nu_n\omega^n(s,\cdot)-\nu_m\omega^m(s,\cdot)\right)}_{(V)}\de s.
\end{split}
\end{equation}
In order to estimate $\|u^n(t)-u^m(t)\|_{L^2}$ we estimate separately the $L^2$ norms of the terms on the right hand side of \eqref{diff}. We start by estimating $(I)$: given $\eta>0$, since the initial datum $u^n_0$ converges in $L^2$ to $u_0$, we have that there exists $N_1$ such that 
\begin{equation}\label{I}
\|u^n_0-u^m_0\|_{L^2}<\eta \hspace{1cm}\mbox{for any }n,m>N_1.
\end{equation}
We deal now with $(II),(III)$: if $\omega_0\in L^p_c(\R^2)$ with $1<p<2$, by Young's convolution inequality we have that
\begin{equation}
\label{young1}
\|(aK)*(\omega^n(t)-\omega^m(t))\|_{L^2}\leq \|aK\|_{L^q}\|\omega^n(t)-\omega^m(t)\|_{L^p},
\end{equation}
where $1<q<2$ is such that $1+\frac{1}{2}=\frac{1}{q}+\frac{1}{p}$, while for $p\geq 2$
\begin{equation}
\|(aK)*(\omega^n(t)-\omega^m(t))\|_{L^2}\leq \|aK\|_{L^1}\|\omega^n(t)-\omega^m(t)\|_{L^2}. 
\label{young2}
\end{equation}
Since $\|aK\|_{L^q}\leq \|K\|_{L^q(B_2)}$ and $K\in L^q_{\mathrm{loc}}(\R^2)$ for any $1\leq q<2$, by the strong convergence of $\omega^n$ proved in Theorem \ref{teo:strong_convergence_vort}, there exists $N_2$ such that
\begin{equation}\label{II+III}
\|(aK)*(\omega^n(t)-\omega^m(t))\|_{L^2}+\|(aK)*(\omega^n_0-\omega^m_0)\|_{L^2}<C\eta,
\end{equation}
for any $n,m>N_2$. We deal now with $(IV)$: by Young's convolution inequality we have that
\begin{align}
\|\nabla\nabla^\perp&[(1-a)K]\star(u^n(s)\otimes u^n(s)-u^m(s)\otimes u^m(s))\|_{L^2}\nonumber\\ &\leq \|\nabla\nabla^\perp[(1-a)K]\|_{L^2}\underbrace{\|u^n(s)\otimes u^n(s)-u^m(s)\otimes u^m(s)\|_{L^1}}_{(IV*)}\label{arr}.
\end{align}
We add and subtract $u^n(s,\cdot)\otimes u^m(s,\cdot)$ in $(IV*)$ and by H\"older inequality we have
\begin{align*}
\|u^n(s)\otimes u^n(s)&-u^m(s)\otimes u^m(s)\|_{L^1}\\ &\leq \left(\|u^n(s)\|_{L^2}+\|u^m(s)\|_{L^2}\right)\|u^n(s)-u^m(s)\|_{L^2}.
\end{align*}
For the first factor in \eqref{arr} we have that
$$
\nabla\nabla^\perp[(1-a)K_i]=-(\nabla\nabla^\perp a )K_i-\nabla^\perp a\nabla K_i-\nabla a \nabla^\perp K_i+(1-a)\nabla\nabla^\perp K_i,
$$
and it is easy to see that each term on the right hand side has uniformly bounded $L^2$-norm. Then we have that
\begin{equation}\label{IV}
\begin{split}
\int_0^t\|\nabla\nabla^\perp[(1-a)K]\star(u^n(s)&\otimes u^n(s)-u^m(s)\otimes u^m(s)\|_{L^2}\de s\\ & \leq C \|u_0\|_{L^2}\int_0^t\|u^n(s)-u^m(s)\|_{L^2}\de s.
\end{split}
\end{equation}
Finally, we deal with $(V)$: again by Young's inequality we have that
\begin{align*}
&\left\|\left(\Delta[(1-a)K_i]\right)*\left(\nu_n\omega^n(s)-\nu_m\omega^m(s)\right)\right\|_{L^2}\\
&\leq \nu_n\|\Delta[(1-a)K_i]\|_{L^q}\|\omega^n(s)-\omega^m(s)\|_{L^p}+\left| \nu_m-\nu_n \right|\|\Delta[(1-a)K_i]\|_{L^q}\|\omega^m(s)\|_{L^p},
\end{align*}
where $p$ and $q$ are chosen as in \eqref{young1} or \eqref{young2} depending on whether $p$ is bigger or smaller than $2$. Since $\Delta K_i$ is in $L^q(B_1^c)$, a straightforward computation shows that $\Delta[(1-a)K]$ is bounded in $L^q$. So there exists $N_3$ such that for all $n,m>N_3$ we have that
\begin{equation}\label{V}
\left\|\left(\Delta[(1-a)K]\right)*\left( \nu_n\omega^n(s)-\nu_m\omega^m(s)\right)\right\|_{L^2}\leq C\eta.
\end{equation}
Then, putting together \eqref{I},\eqref{II+III},\eqref{IV} and \eqref{V} we obtain that for all $n,m>N:=\max\{N_1,N_2,N_3\}$
\begin{equation}
\|u^n(t)-u^m(t)\|_{L^2}\leq C\left(\eta+\int_0^t\|u^n(s)-u^m(s)\|_{L^2}\de s\right),
\end{equation}
and by Gr\"onwall's lemma
\begin{equation}\label{proof:cauchyfinal}
\|u^n(t)-u^m(t)\|_{L^2}\leq C\eta.
\end{equation}
Taking the supremum in time in \eqref{proof:cauchyfinal} we obtain \eqref{conv:glob}. \\
\\
\underline{Step 3}\hspace{0.5cm}\emph{Conservation of energy.}\\
\\
First of all, we can restrict our attention to the case $\omega_0\in L^p_c(\R^2)$ with $1<p<3/2$, otherwise there is nothing to prove (see \cite{CFLS}). Let $u^\nu$ be the unique smooth solution of the Navier-Stokes equations \eqref{eq:ns} and let $\omega^\nu=\curl u^\nu$, which satisfies the equation
\begin{equation}\label{eq:NSvort}
\partial_t \omega^\nu+u^\nu\cdot\nabla\omega^\nu=\nu\Delta\omega^\nu.
\end{equation}
Multiplying \eqref{eq:NSvort} by $\omega^\nu$ and integrating over $\R^2$ we obtain
\begin{equation}
\label{eq:bilancio_vort}
\frac{\de}{\de t}\|\omega^\nu(t)\|_{L^2}^2=-2\nu\|\nabla \omega^\nu(t)\|_{L^2}^2.
\end{equation}
By using the Gagliardo-Niremberg inequality we have that
\begin{equation}\label{proof:consvanGN}
\|\omega^\nu(t)\|_{L^2}\leq \|\nabla\omega^\nu(t) \|_{L^2}^{1-\frac{p}{2}}\|\omega^\nu(t) \|_{L^p}^{\frac{p}{2}},
\end{equation}
from which it follows that
\begin{equation}\label{proof:consvaninvGN}
-2\nu\|\nabla\omega^\nu(t) \|_{L^2}\leq -2\nu\|\omega^\nu(t)\|_{L^2}^{\frac{4}{2-p}}\|\omega^\nu(t) \|_{L^p}^{-\frac{2p}{2-p}}.
\end{equation}
We multiply \eqref{eq:NSvort} by $|\omega^\nu|^{p-2}\omega^\nu$ and integrating on $\R^2$ we also get
$$
\|\omega^\nu(t)\|_{L^p}\leq\|\omega_0^\nu\|_{L^p},
$$
and substituting in \eqref{proof:consvaninvGN} and in \eqref{eq:bilancio_vort} we obtain
\begin{equation}\label{proof:convanineq}
\frac{\de}{\de t}\|\omega^\nu(t)\|_{L^2}^2\leq -2\nu\|\omega^\nu(t)\|_{L^2}^{\frac{4}{2-p}}\|\omega_0^\nu\|_{L^p}^{-\frac{2p}{2-p}}.
\end{equation}
Define $y(t)=\|\omega^\nu(t)\|_{L^2}^2$ and take $C_0$ such that $\|\omega_0^\nu\|_{L^p}^{-\frac{2p}{2-p}}\leq C_0$, where we can assume that $C_0$ is independent from $\nu$ because of the (strong) convergence of $\omega_0^\nu$ towards $\omega_0$ in $L^p$. Then, integrating in time in \eqref{proof:convanineq} we obtain
$$
y(t)^{-\frac{p}{2-p}}-y(0)^{-\frac{p}{2-p}}\geq \frac{2\nu p C_0}{2-p}t,
$$
from which it follows that
\begin{equation}\label{proof:consvanmain}
\|\omega^\nu(t)\|_{L^2}^2\leq\left(\|\omega_0^\nu\|_{L^2}^{-\frac{2p}{2-p}}+\frac{2\nu p C_0 t}{2-p}\right)^{-\frac{2-p}{p}}.
\end{equation}
Smooth solutions of the 2D Navier-Stokes equations satisfy the energy identity
$$
\frac{\de}{\de t}\|u^\nu(t)\|_{L^2}^2=-2\nu\|\nabla u^\nu(t)\|_{L^2}^2,
$$
and rewriting the right hand side in terms of the vorticity we have
\begin{equation}\label{proof:consvanenerbal}
\frac{\de}{\de t}\|u^\nu(t)\|_{L^2}^2=-2\nu\|\omega^\nu(t)\|_{L^2}^2.
\end{equation}
Hence, integrating in time in \eqref{proof:consvanenerbal} and using \eqref{proof:consvanmain} we deduce that
\begin{align}
0&\geq \|u^\nu(t)\|_{L^2}^2-\|u^\nu_0\|_{L^2}^2 \geq -2\nu\int_0^t\left(\|\omega_0^\nu\|_{L^2}^{-\frac{2p}{2-p}}+\frac{2\nu p C_0 t}{2-p}\right)^{-\frac{2-p}{p}}\de s\nonumber\\ 
\label{proof:convvanfinal}
& =-\frac{2-p}{2C_0(p-1)}\left[\left(\|\omega_0^\nu\|_{L^2}^{-\frac{2p}{2-p}}+\frac{2\nu p C_0}{2-p}t\right)^{\frac{2(p-1)}{p}} -\|\omega_0^\nu\|_{L^2}^{-\frac{2p}{2-p}} \right].
\end{align}
Now, since $\omega_0\notin L^2(\R^2)$ we must have that
$$
\lim_{\nu\to 0}\|\omega_0^\nu\|_{L^2}=+\infty,
$$
and then, being $p>1$, the right hand side of \eqref{proof:convvanfinal} vanishes as $\nu\to 0$. Therefore, by using \eqref{conv:glob} we have that
$$
0=\lim_{\nu\to 0} \left(\|u^\nu(t)\|_{L^2}^2-\|u^\nu_0\|_{L^2}^2\right)=\|u(t)\|_{L^2}^2-\|u_0\|_{L^2}^2, 
$$
which concludes the proof.
\end{proof}

\end{document}